\documentclass[11pt]{article}
\usepackage{amsmath, amsthm, amssymb, amsfonts, mathtools, bm}
\usepackage{graphicx}
\usepackage[hidelinks]{hyperref}
\usepackage[a4paper,margin=20mm]{geometry}
\usepackage{algorithm}
\usepackage{algorithmic}
\usepackage{enumitem}
\usepackage{autonum}
\usepackage{subcaption}

\makeatletter
\renewcommand\paragraph{\@startsection{paragraph}{4}{\z@}%
  {3.25ex \@plus1ex \@minus.2ex}%
  {-1em}%
  {\normalfont\normalsize\bfseries}}
\makeatother
\makeatletter
\let\origparagraph\paragraph
\renewcommand{\paragraph}[1]{%
  \origparagraph{#1.}%
}
\makeatother

\newtheorem{theorem}{Theorem}[section]
\newtheorem{lemma}[theorem]{Lemma}
\newtheorem{proposition}[theorem]{Proposition}

\theoremstyle{definition}

\newtheorem{remark}[theorem]{Remark}
\newtheorem{assumption}[theorem]{Assumption}

\makeatletter
\renewenvironment{proof}[1][\proofname]{\par
  \pushQED{\qed}%
  \normalfont
  \topsep6\p@\@plus6\p@\relax
  \trivlist
  \item[\hskip\labelsep\bfseries #1\@addpunct{.}]%
}{%
  \popQED\endtrivlist\@endpefalse
}
\makeatother

\newcommand{\R}{\mathbb{R}}
\newcommand{\E}{\mathbb{E}}
\newcommand{\norm}[1]{\left\lVert #1 \right\rVert}
\newcommand{\ip}[2]{\left\langle #1,\,#2 \right\rangle}
\newcommand{\dW}{\,\mathrm d W_t}
\newcommand{\dt}{\,\mathrm d t}

\newcommand{\di}{\,\mathrm d}

\title{Nonlocal Mean Field Schr\"{o}dinger Bridge with Learned Interactions}
\author{
  Daisuke Inoue\thanks{Department of Mathematics, Imperial College London, London SW7 2AZ, United Kingdom. \texttt{dinoue@ic.ac.uk}}
  \and
  Dante Kalise\thanks{Department of Mathematics, Imperial College London, London SW7 2AZ, United Kingdom. \texttt{d.kalise-balza@imperial.ac.uk}}
  \and
  Mathieu Lauri\`{e}re\thanks{Shanghai Frontiers Science Center of Artificial Intelligence and Deep Learning, NYU-ECNU Institute of Mathematical Sciences, NYU Shanghai, Shanghai 200126, People's Republic of China. \texttt{mathieu.lauriere@nyu.edu}}
}
\date{}

\begin{document}
\maketitle

\begin{abstract}
  The Schr\"odinger Bridge Problem connects an initial distribution to a terminal one along a minimum-energy stochastic process. Its mean-field extension, the Mean-Field Schr\"odinger Bridge, governs interacting populations whose dynamics and costs depend on the collective distribution. When these interactions are nonlocal, their direct evaluation scales
  quadratically with the population size, making large ensembles intractable
  within FBSDE-based solvers. We replace these terms with neural surrogates in
  state and time, trained on empirical interaction values along sampled
  trajectories and embedded in a four-stage alternating scheme that updates the forward and backward potentials and the surrogates in turn, while preserving forward--backward consistency and the prescribed endpoint marginals. We derive Gr\"onwall-type stability bounds quantifying how surrogate errors propagate to the generated trajectories under a small-gain condition. On crowd-navigation and high-dimensional opinion-dynamics benchmarks, the surrogates reproduce the trajectories obtained with exact evaluation at reduced training cost. The advantage is most significant when the interaction is a nonlinear functional of the measure, such as the normalized bounded-confidence drift, for which random-batch subsampling is biased and unstable whereas the learned surrogate remains accurate.
\end{abstract}

\section{Introduction}

The Schr\"odinger Bridge Problem (SBP) constructs a stochastic process that connects an initial distribution $\rho_0$ to a terminal distribution $\rho_T$ with minimum transport energy.
First posed by Schr\"odinger~\cite{Erwin1931Uber}, it is now read as an entropy-regularized optimal transport problem~\cite{Leonard2013surveya,Chen2021Stochastic}, a connection that underlies most current numerical algorithms.
The framework has been applied to stochastic optimal control~\cite{Chen2016Optimal,Chen2016Optimala,Chen2018Optimal,Caluya2022Wasserstein,Caluya2021Reflected} and probabilistic inference~\cite{Vargas2021Solving,Chen2021Likelihood}.
More recently, diffusion-based generative models have also cast trajectory generation between data and prior distributions as an SBP~\cite{DeBortoli2021Diffusiona,Liu2023I$^2$SB,Shi2023Diffusiona}.
A natural extension of SBP, and the subject of this paper, is to \emph{interacting} particle systems, in which a population of agents is transported under dynamics and costs that depend on the collective distribution.
Such problems appear in swarm robotics with collision avoidance~\cite{Zheng2022Transporting,Cui2023Scalable,Rapakoulias2025Steeringa}, crowd density steering~\cite{Caluya2022Wasserstein,Caluya2021Reflected}, and trajectory reconstruction from cellular movement data~\cite{Schiebinger2019OptimalTransport,Tong2020TrajectoryNet,Zhang2025Modeling}.
In the finite-particle formulation with $N$ agents, each agent's dynamics and cost depend on the configuration of the other $N-1$ agents.
Evaluating these pairwise interactions requires $O(N^2)$ operations per time step, making direct optimization intractable for large populations.

This motivates the mean-field limit~\cite{Sznitman1991Topics,Lacker2016General,Carmona2018Probabilistic}: as $N\to\infty$, a typical agent interacts with a continuum density $\rho(x,t)$ rather than with $N-1$ individual agents.
The $N$-particle problem then becomes a problem on the space of probability measures, the \emph{Mean-Field Schr\"odinger Bridge} (MFSB)~\cite{Backhoff2020Mean,Hernandez2025Propagation}.
Its optimality conditions are given by a coupled Hamilton--Jacobi--Bellman (HJB) and Fokker--Planck (FP) system~\cite{Pham2018Bellman}, which admits a probabilistic representation as Forward--Backward Stochastic Differential Equations (FBSDEs)~\cite{Chen2021Likelihood}.
Building on this representation, Liu et al.~\cite{Liu2022Deepa} proposed a deep learning method that solves the MFSB by alternately training forward and backward neural networks to satisfy the FBSDE system, bringing MFSBs within reach of dimensions out of scope for classical PDE solvers~\cite{Han2018Solving,Ruthotto2020Machine}.
Because the FBSDE solver trains on finite particle ensembles, its cost depends on how the distribution-dependent terms are evaluated along the simulated trajectories.
The \emph{local} and \emph{aggregated} distributional terms considered by Liu et al.~\cite{Liu2022Deepa} can be evaluated without forming all pairwise interactions: the former depend on the pointwise density $\rho(x,t)$, while the latter are batch-level statistics shared across particles, such as the mean $\E_{\rho_t}[y]$.
In contrast, the \emph{nonlocal} interactions considered here require a separate empirical population average at each evaluation point $x$; kernel convolutions $\int k(x,y)\rho(y,t)\,\mathrm{d}y$ are a representative example.
Such terms arise in pairwise collision avoidance and crowd congestion, but direct evaluation for all $N$ particles and all training iterations costs $O(N^2 K M_{\text{iter}})$, with $K$ the number of time steps and $M_{\text{iter}}$ the number of training iterations.

In this paper, we build on the FBSDE-based framework to address nonlocal interactions in the MFSB.
The idea is to replace the analytically known but expensive nonlocal terms in the dynamics and running cost by \emph{neural surrogates}.
During training, the exact empirical interaction value is evaluated on sampled particle trajectories and used as supervision; after training, the surrogate returns the corresponding value from the particle state and time alone.
A related line of work approximates known distribution-dependent interactions or controls by neural networks: deep solvers for McKean--Vlasov FBSDEs~\cite{Han2024Learning,Hu2025Deep}, neural population-dependent controls in mean-field control~\cite{Dayanikli2024Deep}, and random-Fourier-feature surrogates for known kernel interactions~\cite{Cao2026Scalable}.
Compared with these works, the present setting is the Schr\"odinger bridge, in which the surrogate must coexist with the forward--backward consistency and the prescribed endpoint marginals.
Concretely, we use separate networks for these two nonlocal terms and organize training as a four-stage loop that alternates between updating the forward/backward potentials and updating the surrogates.
The contributions of this paper are:
\begin{enumerate}[label=\arabic*)]
  \item A proposal of a four-stage alternating algorithm for the nonlocal MFSB.
        In addition to the forward and backward FBSDE potentials, the algorithm trains separate neural surrogates for the nonlocal term in the dynamics and the nonlocal term in the running cost, reducing the per-step evaluation cost from $O(N^2)$ to $O(N)$ in the trajectory inference steps.
  \item A stability analysis for the FBSDE system of the MFSB under approximation of the interaction terms.
        The analysis yields Gr\"onwall-type bounds that quantify how surrogate errors in the nonlocal dynamics and running cost propagate to the forward and backward FBSDE solutions, and to the combined Schr\"odinger bridge satisfying both marginal constraints.
  \item Numerical experiments on two navigation tasks and a high-dimensional opinion-dynamics task.
        The learned surrogates reproduce trajectories obtained with analytical evaluation while reducing training time across the tested problems.
\end{enumerate}

The rest of the paper is organized as follows.
Section~\ref{sec:problem} formulates the finite-particle control problem, passes to the mean-field Schr\"odinger bridge, and derives the HJB--FP and FBSDE characterizations that underlie the learning algorithm.
Section~\ref{sec:formulation} starts from the FBSDE losses proposed by Liu et al.~\cite{Liu2022Deepa}, then introduces neural surrogates for the nonlocal drift and cost and combines them into a four-stage alternating training procedure.
Section~\ref{sec:theory} analyzes how surrogate errors in the nonlocal drift and running cost affect the forward and backward stochastic systems.
Section~\ref{sec:experiments} evaluates the method on crowd-navigation and opinion-dynamics benchmarks, comparing analytical evaluation, the learned surrogate, and the random-batch approximation.
Section~\ref{sec:conclusion} summarizes the findings and discusses remaining theoretical questions.

\paragraph{Related Work}

The link between the SBP and optimal transport has been studied from a stochastic control perspective~\cite{Leonard2013surveya,Chen2021Stochastic,DaiPra1991stochastic}, with numerical methods including Sinkhorn-type algorithms~\cite{Cuturi2013Sinkhorn,Peyre2019Computational}, FBSDE-based approaches~\cite{Vargas2021Solving,Chen2021Likelihood}, and neural-network methods~\cite{DeBortoli2021Diffusiona,Shi2023Diffusiona,Gushchin2024Light}.
The mean-field extension was formulated by Backhoff et al.~\cite{Backhoff2020Mean}; Hern\'andez and Tangpi~\cite{Hernandez2025Propagation} subsequently established existence, optimality conditions, and propagation of chaos via terminal-marginal penalization.
For the same MFSB setting, Eldesoukey, Chen, and Halder~\cite{Eldesoukey2026Generalized} recently developed a generalized Sinkhorn algorithm for the associated nonlocal PDE system.
On the algorithmic side, Liu et al.~\cite{Liu2022Deepa} treat the MFSB as a mean-field control problem with state- and distribution-dependent running costs, including local or aggregated distributional terms whose particle evaluation does not create a pairwise bottleneck; the present work extends this FBSDE-based neural approach to nonlocal interactions, which are expensive to evaluate directly.
More recently, Nakano~\cite{Nakano2026Kernelbased} studied a subclass of potential mean-field games in which terminal constraints are relaxed and interaction costs are represented through kernel maximum mean discrepancy penalties, and developed unbiased random Fourier $U$-statistics to reduce the cost of kernel evaluations.
In contrast, we use learned surrogates for nonlocal interactions in the FBSDE drift and running cost, without assuming a kernel or random-feature structure.

Interaction learning in many-agent systems splits into two settings: recovering \emph{unknown} interaction laws from data, and approximating \emph{known} interactions with cheaper surrogates. Our work falls in the second.
In the first setting, trajectory-based nonparametric estimators with theoretical guarantees extend to heterogeneous and stochastic systems~\cite{Lu2019Nonparametric,Lu2021Learninga,Lu2022Learning}; complementary directions include likelihood-based drift estimation~\cite{Yao2022Meanfield}, density-based identification~\cite{Lang2022Learning,shiExtractinginteractionkernels2025}, and inverse problems for mean-field games that recover interaction costs from aggregate data~\cite{Yang2018Learninga,Ding2022Mean,Liu2023Inverse,Mo2024GameTheoretic}.
In the second, Dayanikli et al.~\cite{Dayanikli2024Deep} learn population-dependent controls with neural embeddings, Han et al.~\cite{Han2024Learning} and Hu et al.~\cite{Hu2025Deep} build deep solvers for McKean--Vlasov FBSDEs, and Cao et al.~\cite{Cao2026Scalable} accelerate kernel-based training via random Fourier features; broader kernel-based surrogate modeling is surveyed in~\cite{Fiedler2025Recent}.
Compared with these, we aim to accelerate the MFSB with known but expensive nonlocal interactions while preserving the forward--backward consistency and prescribed endpoint marginals of the Schr\"odinger bridge. To our knowledge, this is the first surrogate-acceleration strategy specifically for this setting.

\paragraph{Notation}
We denote by $\mathcal{P}(\mathbb{R}^d)$ the space of probability measures on $\mathbb{R}^d$ equipped with the 2-Wasserstein metric.
The notation $\mathbb{E}[\cdot]$ denotes expectation, $\nabla_x$ denotes the spatial gradient, and $\Delta_x$ denotes the spatial Laplacian.
The symbol $\|\cdot\|$ denotes the Euclidean norm for vectors and the Frobenius norm for matrices.

\section{Problem Formulation and FBSDE Characterization}\label{sec:problem}

This section starts from the finite-particle control problem, takes its mean-field limit, derives the associated HJB--FP equations, and recalls the FBSDE representation used by the learning algorithm.

\subsection{Finite Particle Control Problem}

Consider $N$ interacting particles in $\mathbb{R}^d$.
Let $(\Omega,\mathcal{F},(\mathcal{F}_t)_{t\in[0,T]},\mathbb{P})$ be a filtered probability space supporting $N$ independent $d$-dimensional Brownian motions $\{W_i\}_{i=1}^N$.
The particle states $x_i(t)\in\mathbb{R}^d$ are driven by $\mathcal{F}_t$-adapted controls $\alpha_i(t)\in\mathbb{R}^d$.
We consider the cost functional and dynamics
\begin{align}\label{eq:micro}
   & \inf_{\{\alpha_i\}_{i=1,\ldots,N}}\; \sum_{i=1}^N \E\!\left[\int_0^T \frac{1}{2\sigma^2}\|\alpha_i(t)\|^2 + F(x_i(t),t, \rho_t^N)\,\di t\right], \\
   & \quad \text{s.t.}\quad
  \di x_i(t) = \bigl(\alpha_i(t) + f(x_i(t),t, \rho_t^N)\bigr)\,\di t + \sigma\,\di W_i(t)
  ,                                                                                                                                                   \\
   & \qquad \qquad x_i(0)\sim\rho_0,\ x_i(T)\sim\rho_T.
\end{align}
Here $F:\R^d\times[0,T]\times\mathcal{P}(\R^d)\to\R$ is the nonlocal running cost, $f:\R^d\times[0,T]\times\mathcal{P}(\R^d)\to\R^d$ the nonlocal drift, $\sigma\in\R$ the diffusion coefficient, and
\[
  \rho_t^N \coloneqq \frac{1}{N}\sum_{j=1}^N \delta_{x_j(t)}
\]
is the empirical distribution of the particle system.

Unlike standard stochastic control, both the initial and terminal distributions of the particle laws are prescribed,
so classical stochastic-control techniques do not apply directly.
When $f=F=0$, it reduces to the classical Schr\"odinger Bridge, i.e., entropy-regularized optimal transport minimizing the relative entropy of the path measure with respect to the reference diffusion, subject to the two endpoint marginals.
We distinguish these terms from local or aggregated distributional terms: local terms depend on quantities such as $\rho(x,t)$ at the evaluation point, while aggregated terms are population-level statistics that can be computed once and shared across particles.
This paper focuses on \emph{nonlocal} interactions, where the drift or cost depends on a population average evaluated separately at each particle state.
A canonical example is a spatial convolution of the density:
\begin{align}
  \begin{aligned}\label{eq:nonlocal-form}
    f(x_i(t),t,\rho_t^N) & = \int k_f\bigl(x_i(t),x\bigr) \rho_t^N(\di x) = \frac{1}{N}\sum_{j=1}^N k_f\bigl(x_i(t),x_j(t)\bigr) , \\
    F(x_i(t),t,\rho_t^N) & = \int k_F\bigl(x_i(t),x\bigr) \rho_t^N(\di x) = \frac{1}{N}\sum_{j=1}^N k_F\bigl(x_i(t),x_j(t)\bigr) ,
  \end{aligned}
\end{align}
for kernels $k_f:\R^d\times\R^d\to\R^d$ and $k_F:\R^d\times\R^d\to\R$.
A naive evaluation of such nonlocal interactions costs $O(N^2)$ pairwise operations per time step, which makes the finite-$N$ problem intractable for large populations.

\subsection{Mean-Field Schr\"odinger Bridge}

At the formal level, as $N\to\infty$, the empirical distribution $\rho_t^N$ is approximated by a deterministic density $\rho(\cdot,t)$.
A rigorous justification of this limit for Schr\"odinger-type control problems, including propagation of chaos for the underlying particle system, is given in~\cite{Hernandez2025Propagation}.
The Mean-Field Schr\"odinger Bridge (MFSB) is the resulting minimization of the expected cost of a representative particle subject to the transport constraint:
\begin{align}
   & \min_{\alpha} \int_0^T\!\int \Big[\frac{1}{2\sigma^2}\|\alpha(x,t)\|^2 + F(x,t, \rho_t)\Big]\rho(x,t)\,\di x\,\di t, \label{eq:mfsp} \\
   & \quad \text{s.t.}\quad
  \partial_t \rho(x,t) = - \nabla_x\cdot\bigl(\rho(x,t)\, (\alpha(x,t) + f(x,t, \rho_t))\bigr) + \frac{\sigma^2}{2}\Delta_x \rho(x,t),    \\
   & \qquad \quad \rho(\cdot,0)=\rho_0,\ \rho(\cdot,T)=\rho_T. \label{eq:fp-rho}
\end{align}
With $\rho_t=\rho(\cdot,t)$, the interaction terms take the form
\begin{align}
  f(x,t,\rho_t) & = \int k_f(x,y)\,\rho(y,t)\,\di y, \\
  F(x,t,\rho_t) & = \int k_F(x,y)\,\rho(y,t)\,\di y.
\end{align}

\subsection{Coupled PDEs corresponding to the MFSB}

We formally apply Pontryagin's maximum principle to \eqref{eq:mfsp}--\eqref{eq:fp-rho};
a rigorous derivation via penalization of the terminal cost is given in \cite{Hernandez2025Propagation}.
Define the Hamiltonian
\begin{align}
  H(x,\rho,\nabla_x u,t)
  \coloneqq
  \sup_{\alpha}\Big\{
  -\alpha^\top \nabla_x u
  - \frac{1}{2\sigma^2}\|\alpha\|^2
  \Big\}
  - \nabla_x u^\top f
  - F .
\end{align}
The first-order optimality condition gives
$\alpha^\ast = -\sigma^2 \nabla_x u$, so that
\begin{align}
  H
  =
  \frac{\sigma^2}{2}\|\nabla_x u\|^2
  - \nabla_x u^\top f
  - F .
\end{align}
The resulting optimality conditions take the form of coupled Hamilton--Jacobi--Bellman (HJB) and Fokker--Planck (FP) equations
\begin{align}
  -\partial_t u
   & =
  -\frac{\sigma^2}{2}\|\nabla_x u\|^2
  + \nabla_x u^\top f
  + \frac{\sigma^2}{2}\Delta_x u
  + F, \label{eq:hjb-new} \\
  \partial_t \rho
   & =
  \nabla_x\cdot\bigl(\rho(\sigma^2\nabla_x u - f)\bigr)
  + \frac{\sigma^2}{2}\Delta_x\rho, \label{eq:fp-new}
\end{align}
subject to the boundary conditions
$\rho(\cdot,0)=\rho_0$ and $\rho(\cdot,T)=\rho_T$.

We linearize \eqref{eq:hjb-new}--\eqref{eq:fp-new} via the Cole--Hopf transformation~\cite{Leger2021Hopf,DaiPra1991stochastic}
\begin{equation}\label{eq:cole-hopf}
  \Psi(x,t) \coloneqq \exp\bigl(-u(x,t)\bigr),
  \qquad
  \widehat\Psi(x,t) \coloneqq \rho(x,t)\exp\bigl(u(x,t)\bigr),
\end{equation}
so that $\rho = \Psi\widehat\Psi$.
Substituting into \eqref{eq:hjb-new}--\eqref{eq:fp-new} and using standard identities for the Laplacian of $\log\Psi$ yields the linear forward--backward system
\begin{align}
  \partial_t \Psi
   & =
  -\nabla_x\Psi^\top f
  - \frac{\sigma^2}{2}\Delta_x\Psi
  + F\Psi, \label{eq:psi} \\
  \partial_t \widehat\Psi
   & =
  -\nabla_x\cdot(\widehat\Psi f)
  - \frac{\sigma^2}{2}\Delta_x\widehat\Psi
  - F\widehat\Psi. \label{eq:psi-dagger}
\end{align}
The boundary conditions on $\rho$ become
\begin{equation}\label{eq:bc-psi}
  \Psi(\cdot,0)\widehat\Psi(\cdot,0)=\rho_0,
  \qquad
  \Psi(\cdot,T)\widehat\Psi(\cdot,T)=\rho_T.
\end{equation}
Equations \eqref{eq:psi}--\eqref{eq:psi-dagger} are the forward/backward Schr\"odinger system associated with the MFSB.

\subsection{FBSDEs corresponding to the MFSB}

Liu et al.~\cite{Liu2022Deepa} derived a probabilistic representation for \eqref{eq:psi}--\eqref{eq:psi-dagger} via It\^o's formula.
Define the logarithmic value functions
\begin{align}
  Y(x,t) \coloneqq -\log \Psi(x,t),\qquad Z(x,t) \coloneqq \sigma\nabla_x Y(x,t), \\
  \widehat Y(x,t) \coloneqq -\log \widehat\Psi(x,t),\qquad \widehat Z(x,t) \coloneqq \sigma\nabla_x \widehat Y(x,t).
\end{align}
Applying It\^o's formula to $Y(X_t, t)$ and $\widehat Y(X_t, t)$ along the diffusion processes gives the FBSDE system below.

The forward-time FBSDE for \eqref{eq:psi} reads
\begin{subequations}\label{eq:fbsde-forward-full}
  \begin{align}
    \di X_t          & = \big(f(X_t,t,\rho_t)+\sigma Z_t\big)\dt + \sigma\,\di W_t,\label{eq:fbsde-forward-full-x}                                                                                                                                      \\
    \di Y_t          & = \Big(\tfrac12\norm{Z_t}^2 + F(X_t,t,\rho_t)\Big)\dt + Z_t^\top \dW,\label{eq:fbsde-forward-full-y}                                                                                                                             \\
    \di \widehat Y_t & = \Big(\tfrac12\norm{\widehat Z_t}^2 + \nabla_x\!\cdot\big(\sigma \widehat Z(\cdot,t) - f(\cdot,t,\rho_t)\big)(X_t) + \widehat Z_t^\top Z_t - F(X_t,t,\rho_t)\Big)\dt + \widehat Z_t^\top \dW.\label{eq:fbsde-forward-full-yhat}
  \end{align}
\end{subequations}
The divergence term $\nabla_x\cdot(\sigma\widehat Z - f)$ in \eqref{eq:fbsde-forward-full-yhat} comes from the second-order term in It\^o's formula when converting the Fokker--Planck-type PDE \eqref{eq:psi-dagger} into a backward SDE for the log-density.

The backward-time FBSDE, solved from $T$ to $0$, is
\begin{subequations}\label{eq:fbsde-backward-full}
  \begin{align}
    \di \widehat X_t
     & = \big(-f(\widehat X_t,t,\widehat\rho_t)+\sigma \widehat Z_t\big)\dt
    + \sigma\,\di W_t,\label{eq:fbsde-backward-full-x}                                                                    \\
    \di \widehat{Y}_t
     & = \Big(\tfrac12\norm{\widehat{Z}_t}^2 + F(\widehat X_t,t,\widehat\rho_t)\Big)\dt
    + \widehat{Z}_t^\top \di W_t,\label{eq:fbsde-backward-full-yhat}                                                      \\
    \di Y_t
     & = \Big(\tfrac12\norm{Z_t}^2 + \nabla_x\!\cdot\big(\sigma Z(\cdot,t) + f(\cdot,t,\widehat\rho_t)\big)(\widehat X_t)
    + Z_t^\top \widehat{Z}_t - F(\widehat X_t,t,\widehat\rho_t)\Big)\dt
    + Z_t^\top \di W_t.\label{eq:fbsde-backward-full-y}
  \end{align}
\end{subequations}
Here $\widehat X_T\sim\rho_T$, $\widehat\rho_t$ denotes the distribution of $\widehat X_t$.
The divergence term $\nabla_x\cdot(\sigma Z + f)$ in \eqref{eq:fbsde-backward-full-y} arises from the dual relation.
A solution of the Schr\"odinger bridge problem is characterized by consistency between the forward process $(X_t)$ and the backward process $(\widehat X_t)$:
they induce the same path measure and satisfy $\rho_t = \widehat\rho_t$ for all $t\in[0,T]$.

The variables $Y$ and $\widehat Y$ are not independent value functions; they are complementary potentials whose sum recovers the density via the Cole--Hopf relation $\rho=\exp(-(Y+\widehat Y))$.
In the forward-time FBSDE, $Y$ is the HJB value function enforcing optimality of the controlled dynamics, while $\widehat Y$ is an auxiliary potential that aligns the density with the terminal constraint $\rho_T$.
In the backward-time FBSDE the roles are reversed: $\widehat Y$ is the backward HJB value function and $Y$ enforces consistency with the initial distribution $\rho_0$.
The two FBSDEs are thus two stochastic representations of the same pair $(\Psi,\widehat\Psi)$; their consistency characterizes the Schr\"odinger bridge under both marginal constraints.

\section{Learned Surrogates and a Four-Stage FBSDE Scheme}\label{sec:formulation}

Our method builds on the FBSDE learning framework of Liu et al.~\cite{Liu2022Deepa}.
The framework approximates the FBSDE solutions with neural networks.
For local or aggregated distributional terms, the required particle-batch evaluation is not the dominant cost.
For nonlocal interactions, however, the same evaluation becomes an empirical population average over all particles for each evaluation point.
To reduce this cost, we introduce learned surrogates for the nonlocal interaction terms.
We then analyze the resulting computational cost.

\subsection{Baseline FBSDE Learning Objectives}

Following~\cite{Liu2022Deepa}, we parameterize the unknown functions in the FBSDEs \eqref{eq:fbsde-forward-full}--\eqref{eq:fbsde-backward-full} by neural networks and train them with the losses below.
Our method keeps these losses and changes only how the interaction terms are evaluated.
For the forward SDE, $Y(x,t)$ and $Z(x,t)$ are represented by networks $\tilde Y_\theta(x,t)$ and $\tilde Z_\theta(x,t)$ with parameters $\theta$.
For the backward SDE, $\widehat Y(x,t)$ and $\widehat Z(x,t)$ are represented by $\tilde{\widehat Y}_\phi(x,t)$ and $\tilde{\widehat Z}_\phi(x,t)$ with parameters $\phi$.
On a time grid $0=t_0<t_1<\dots<t_K=T$, with $\Delta W_k=W_{t_{k+1}}-W_{t_k}$, samples from the forward SDE and the backward SDE are generated by the Euler--Maruyama updates
\begin{align}
  X_{t_{k+1}}
   & =
  X_{t_k}
  + f(X_{t_k}, t_k, \rho_{t_k})\Delta t
  + \sigma \tilde Z_\theta(X_{t_k},t_k)\Delta t
  + \sigma \Delta W_k, \\
  \widehat X_{t_k}
   & =
  \widehat X_{t_{k+1}}
  -\big(-f(\widehat X_{t_{k+1}}, t_{k+1}, \widehat\rho_{t_{k+1}})
  + \sigma \tilde{\widehat Z}_\phi(\widehat X_{t_{k+1}},t_{k+1})\big)\Delta t
  - \sigma \Delta W_k.
\end{align}
The losses below are evaluated on the trajectory used in the corresponding update in Algorithm~\ref{alg:fourstep_alt_resample}: backward losses for $\phi$ use the forward trajectory $X$, while forward losses for $\theta$ use the backward trajectory $\widehat X$.

\paragraph{IPF Loss}
The Iterative Proportional Fitting (IPF) loss is
\begin{align}
  \mathcal{L}_{\mathrm{IPF}}^{\mathrm{fwd}}(\theta)
   & \coloneqq \sum_{k=0}^{K-1} \Delta t\, \E\!\left[
                                                 \frac{1}{2}\norm{\tilde Z_\theta(\widehat X_{t_k},t_k) + \tilde{\widehat Z}_\phi(\widehat X_{t_k},t_k)}^2
                                                 + \nabla_x\!\cdot\big(\sigma \tilde Z_\theta(\cdot,t_k) - f(\cdot,t_k,\widehat\rho_{t_k})\big)(\widehat X_{t_k})
                                                 \right], \\
  \mathcal{L}_{\mathrm{IPF}}^{\mathrm{bwd}}(\phi)
   & \coloneqq \sum_{k=0}^{K-1} \Delta t\, \E\!\left[
                                                 \frac{1}{2}\norm{\tilde{\widehat Z}_\phi(X_{t_k},t_k) + \tilde Z_\theta(X_{t_k},t_k)}^2
                                                 + \nabla_x\!\cdot\big(\sigma \tilde{\widehat Z}_\phi(\cdot,t_k) + f(\cdot,t_k,\rho_{t_k})\big)(X_{t_k})
                                                 \right].
\end{align}
These correspond to $\mathrm{KL}(q_\theta\Vert q_\phi)$ and $\mathrm{KL}(q_\phi\Vert q_\theta)$ for the forward and backward directions, and drive the two drifts toward each other.

\paragraph{TD Loss}
The Temporal Difference (TD) loss enforces temporal consistency of the value functions.
Discretizing \eqref{eq:fbsde-forward-full-yhat} along the forward trajectory, we compare $\tilde{\widehat Y}_\phi(X_{t_{k+1}},t_{k+1})$ to the target propagated from $t_k$:
\begin{align}
  \tilde{\widehat Y}_\phi(X_{t_{k+1}},t_{k+1})
  \approx
  \mathrm{TD}_{\phi|\theta}^{\mathrm{bwd}}(X_{t_k},t_k),
\end{align}
where
\begin{align}
  \mathrm{TD}_{\phi|\theta}^{\mathrm{bwd}} &
  (X_{t_k},t_k)
  \coloneqq\;
  \tilde{\widehat Y}_\phi(X_{t_k},t_k) \notag                                                                    \\
                                           & + \Delta t\Big(
  \tfrac12\norm{\tilde{\widehat Z}_\phi(X_{t_k},t_k)}^2
  + \nabla_x\!\cdot\big(\sigma \tilde{\widehat Z}_\phi(\cdot,t_k) - f(\cdot,t_k,\rho_{t_k})\big)(X_{t_k}) \notag \\
                                           & \hspace{2.6em}
  + \tilde{\widehat Z}_\phi(X_{t_k},t_k)^\top \tilde Z_\theta(X_{t_k},t_k)
  - F(X_{t_k},t_k,\rho_{t_k})
  \Big) \notag                                                                                                   \\
                                           & + \tilde{\widehat Z}_\phi(X_{t_k},t_k)^\top \Delta W_k .
\end{align}
At $t_K=T$, we impose the boundary condition
\begin{equation}\label{eq:td-terminal-bwd}
  \tilde{\widehat Y}_\phi(X_{t_K},t_K)
  \approx
  -\log \rho_T(X_{t_K}),
\end{equation}
which is the terminal constraint of the Schr\"odinger system.
The backward TD loss is then
\begin{align}\label{eq:td-loss-bwd}
  \mathcal{L}_{\mathrm{TD}}^{\text{bwd}}(\phi)
  \coloneqq
  \sum_{k=0}^{K-1}
  \E\!\left[
        \bigl(
        \tilde{\widehat Y}_\phi(X_{t_{k+1}},t_{k+1})
        -
        \mathrm{TD}_{\phi|\theta}^{\mathrm{bwd}}(X_{t_k},t_k)
        \bigr)^2
        \right]
  +
  \E\!\left[
        \bigl(
        \tilde{\widehat Y}_\phi(X_{t_K},t_K)
        + \log \rho_T(X_{t_K})
        \bigr)^2
        \right].
\end{align}
Analogously, discretizing \eqref{eq:fbsde-backward-full-y} backward from $t_{k+1}$ to $t_k$ gives the forward TD residual for interior steps:
\begin{align}\label{eq:td-loss-fwd}
  \mathcal{L}_{\mathrm{TD}}^{\text{fwd}}(\theta)
  \coloneqq
  \sum_{k=0}^{K-1}
  \E\!\left[
        \bigl(
        \tilde Y_\theta(\widehat X_{t_k},t_k)
        -
        \mathrm{TD}_{\theta|\phi}^{\mathrm{fwd}}(\widehat X_{t_{k+1}},t_{k+1})
        \bigr)^2
        \right]
  +
  \E\!\left[
        \bigl(
        \tilde Y_\theta(\widehat X_{t_0},t_0)
        + \log \rho_0(\widehat X_{t_0})
        \bigr)^2
        \right].
\end{align}
where
\begin{align}
  \mathrm{TD}_{\theta|\phi}^{\mathrm{fwd}}
   & (\widehat X_{t_{k+1}},t_{k+1})
  \coloneqq
  \tilde Y_\theta(\widehat X_{t_{k+1}},t_{k+1}) \notag                        \\
   & - \Delta t\Big(
  \tfrac12\norm{\tilde Z_\theta(\widehat X_{t_{k+1}},t_{k+1})}^2
  + \nabla_x\!\cdot\big(\sigma \tilde Z_\theta(\cdot,t_{k+1})
  + f(\cdot,t_{k+1},\widehat\rho_{t_{k+1}})\big)(\widehat X_{t_{k+1}}) \notag \\
   & \hspace{2.6em}
  + \tilde Z_\theta(\widehat X_{t_{k+1}},t_{k+1})^\top
  \tilde{\widehat Z}_\phi(\widehat X_{t_{k+1}},t_{k+1})
  - F(\widehat X_{t_{k+1}},t_{k+1},\widehat\rho_{t_{k+1}})
  \Big) \notag                                                                \\
   & - \tilde Z_\theta(\widehat X_{t_{k+1}},t_{k+1})^\top \Delta W_k .
\end{align}

\paragraph{FK Loss} The Feynman--Kac (FK) loss penalizes the gradient consistency $\sigma\nabla_x Y = Z$:
\begin{align}
  \mathcal{L}_{\mathrm{FK}}^{\mathrm{fwd}}(\theta)
   & \coloneqq \sum_{k=0}^{K} \E\!\left[
                                    \norm{\sigma\nabla_x \tilde Y_\theta(\widehat X_{t_k},t_k) - \tilde Z_\theta(\widehat X_{t_k},t_k)}^2
                                    \right],\label{eq:fk-loss-fwd} \\
  \mathcal{L}_{\mathrm{FK}}^{\mathrm{bwd}}(\phi)
   & \coloneqq \sum_{k=0}^{K} \E\!\left[
                                    \norm{\sigma\nabla_x \tilde{\widehat Y}_\phi(X_{t_k},t_k) - \tilde{\widehat Z}_\phi(X_{t_k},t_k)}^2
                                    \right].\label{eq:fk-loss-bwd}
\end{align}

The total learning objective is
\begin{equation}\label{eq:total-loss}
  \mathcal{L}_{\mathrm{tot}}(\theta,\phi)
  \coloneqq \lambda_{\mathrm{IPF}}\big(\mathcal{L}_{\mathrm{IPF}}^{\mathrm{fwd}}(\theta)+\mathcal{L}_{\mathrm{IPF}}^{\mathrm{bwd}}(\phi)\big)
  + \lambda_{\mathrm{TD}}\bigl(\mathcal{L}_{\mathrm{TD}}^{\mathrm{fwd}}(\theta) + \mathcal{L}_{\mathrm{TD}}^{\mathrm{bwd}}(\phi)\bigr)
  + \lambda_{\mathrm{FK}}\big(\mathcal{L}_{\mathrm{FK}}^{\mathrm{fwd}}(\theta)+\mathcal{L}_{\mathrm{FK}}^{\mathrm{bwd}}(\phi)\big),
\end{equation}
with $\lambda_\bullet$ the relative weights.

\subsection{Four-Stage Algorithm with Interaction Surrogates}\label{sec:algorithm}

The particle counterparts of the baseline losses require evaluating empirical interactions along sampled trajectories.
For large particle systems this is the dominant cost.
To remove this bottleneck, we train separate networks $\tilde f_\psi,\tilde F_\psi$ to match the empirical interaction values, in addition to the forward/backward potentials represented by $\tilde Y_\theta,\tilde Z_\theta,\tilde{\widehat Y}_\phi,\tilde{\widehat Z}_\phi$.
We write $X^{(i)}_{0:K}\coloneqq (X^{(i)}_{t_k})_{k=0}^K$ for one trajectory of particle $i$, and $\mathbf{X}_{0:K}\coloneqq \bigl(X^{(i)}_{0:K}\bigr)_{i=1}^{N}$ for one $N$-particle trajectory.
Backward trajectories are denoted analogously by $\widehat X^{(i)}_{0:K}$ and $\widehat{\mathbf X}_{0:K}$.
For such batches, the empirical laws are
\begin{align}
  \rho_{t_k}^N
  \coloneqq \frac1N\sum_{i=1}^N \delta_{X_{t_k}^{(i)}}, \quad
  \widehat\rho_{t_k}^N
  \coloneqq \frac1N\sum_{i=1}^N \delta_{\widehat X_{t_k}^{(i)}}.
\end{align}
For kernel interactions of the form \eqref{eq:nonlocal-form}, direct evaluation with $\rho_{t_k}^N$ uses the pairwise sums
\begin{align}\label{eq:empirical-pairwise-interactions}
  f(X_{t_k}^{(i)},t_k,\rho_{t_k}^N)
   & =
  \frac{1}{N}\sum_{j=1}^N k_f\bigl(X_{t_k}^{(i)},X_{t_k}^{(j)}\bigr), \\
  F(X_{t_k}^{(i)},t_k,\rho_{t_k}^N)
   & =
  \frac{1}{N}\sum_{j=1}^N k_F\bigl(X_{t_k}^{(i)},X_{t_k}^{(j)}\bigr).
\end{align}
These sums take $O(N^2)$ operations per time step.

On the particle trajectories generated during the forward/backward updates, we train $\tilde f_\psi,\tilde F_\psi$ with the empirical analytical values computed from these sums as supervision targets.
The trained surrogates $\tilde f_\psi(x,t),\tilde F_\psi(x,t)$ are then evaluated in $O(N)$ time in the drift-update steps, replacing the $O(N^2)$ analytical sums there.
The empirical interaction losses on forward and backward particle trajectories are
\begin{align}
  \mathcal{L}_{\mathrm{int}}^{N,\mathrm{fwd}}(\psi)
   & \coloneqq \sum_{k=0}^{K-1} \E\!\left[
                                      \frac1N\sum_{i=1}^N
                                      \norm{\tilde f_\psi(X_{t_k}^{(i)},t_k) - f(X_{t_k}^{(i)},t_k,\rho_{t_k}^N)}^2
                                      \right. \nonumber \\
                                      & \hspace{8em} \left.
                                      + \frac1N\sum_{i=1}^N
                                      \left|\tilde F_\psi(X_{t_k}^{(i)},t_k) - F(X_{t_k}^{(i)},t_k,\rho_{t_k}^N)\right|^2
                                      \right], \label{eq:int-loss-fwd}                           \\[4pt]
  \mathcal{L}_{\mathrm{int}}^{N,\mathrm{bwd}}(\psi)
   & \coloneqq \sum_{k=0}^{K-1} \E\!\left[
                                      \frac1N\sum_{i=1}^N
                                      \norm{\tilde f_\psi(\widehat X_{t_k}^{(i)},t_k) - f(\widehat X_{t_k}^{(i)},t_k,\widehat\rho_{t_k}^N)}^2
                                      \right. \nonumber \\
                                      & \hspace{8em} \left.
                                      + \frac1N\sum_{i=1}^N
                                      \left|\tilde F_\psi(\widehat X_{t_k}^{(i)},t_k) - F(\widehat X_{t_k}^{(i)},t_k,\widehat\rho_{t_k}^N)\right|^2
                                      \right]. \label{eq:int-loss-bwd}
\end{align}

Algorithm~\ref{alg:fourstep_alt_resample} summarizes the resulting four-stage training scheme, in which each outer iteration alternates between interaction updates and drift updates.
In the interaction updates, the surrogate networks $(\tilde f_\psi,\tilde F_\psi)$ are trained with the empirical interaction losses \eqref{eq:int-loss-fwd}--\eqref{eq:int-loss-bwd}, using the analytical pairwise sums evaluated on particle batches through the empirical laws $\rho_{t_k}^N$ and $\widehat\rho_{t_k}^N$ as supervision targets.
In the drift updates, the FBSDE networks $(\tilde Y_\theta,\tilde Z_\theta,\tilde{\widehat Y}_\phi,\tilde{\widehat Z}_\phi)$ are trained with the particle-average loss $\mathcal{L}_{\mathrm{tot}}^N$, the finite-particle counterpart of \eqref{eq:total-loss}, with the learned surrogates supplying the interaction terms.
The algorithm thus minimizes the combined empirical objective
\begin{align}\label{eq:augmented-loss}
  \min_{\theta,\phi,\psi}\ \mathcal{J}^N(\theta,\phi,\psi)
  \coloneqq \  & \mathcal{L}_{\mathrm{tot}}^N(\theta,\phi)+ \lambda_{\mathrm{int}}\mathcal{L}_{\mathrm{int}}^N(\psi),
\end{align}
where $\mathcal{L}_{\mathrm{int}}^N\coloneqq\mathcal{L}_{\mathrm{int}}^{N,\mathrm{fwd}}+\mathcal{L}_{\mathrm{int}}^{N,\mathrm{bwd}}$ and $\lambda_{\mathrm{int}}$ weights the interaction losses.

\begin{algorithm}[h]
  \caption{Four-Stage Training of MFSB}\label{alg:fourstep_alt_resample}
  \begin{algorithmic}[1]
    \WHILE{Not Converged}
    \STATE \textbf{Backward drift update ($\phi$):}
    \FOR{$j=1,\dots,M_{\mathrm{SDE}}$}
    \STATE Sample one $N$-particle batch $\mathbf{X}_{0}$ with $X_0^{(i)}\sim\rho_0$.
    \STATE Generate the forward trajectory $\mathbf{X}_{0:K},\{\Delta W_k\}$ using $(\tilde Y_\theta,\tilde Z_\theta,\tilde f_\psi,\tilde F_\psi)$.
    \STATE Update $(\tilde{\widehat Y}_\phi,\tilde{\widehat Z}_\phi)$ once on this trajectory using the backward part of the particle loss corresponding to \eqref{eq:total-loss}.
    \ENDFOR
    \STATE \textbf{Forward interaction update ($\psi$):}
    \FOR{$m=1,\dots,M_{\mathrm{int}}$}
    \STATE Sample one $N$-particle forward trajectory $\mathbf{X}_{0:K}$, compute $f(X_{t_k}^{(i)},t_k,\rho_{t_k}^N)$ and $F(X_{t_k}^{(i)},t_k,\rho_{t_k}^N)$ for all $i,k$ using \eqref{eq:empirical-pairwise-interactions}.
    \STATE Update $(\tilde f_\psi,\tilde F_\psi)$ once on the fixed trajectory $\mathbf{X}_{0:K}$ using $\mathcal{L}_{\mathrm{int}}^{N,\mathrm{fwd}}$ in \eqref{eq:int-loss-fwd}.
    \ENDFOR
    \STATE \textbf{Forward drift update ($\theta$):}
    \FOR{$j=1,\dots,M_{\mathrm{SDE}}$}
    \STATE Sample one $N$-particle batch $\mathbf{X}_{T}$ with $X_T^{(i)}\sim\rho_T$.
    \STATE Generate the backward trajectory $\widehat{\mathbf X}_{0:K},\{\Delta W_k\}$ using $(\tilde{\widehat Y}_\phi,\tilde{\widehat Z}_\phi,\tilde f_\psi,\tilde F_\psi)$.
    \STATE Update $(\tilde Y_\theta,\tilde Z_\theta)$ once on this trajectory using the forward part of the particle loss corresponding to \eqref{eq:total-loss}.
    \ENDFOR
    \STATE \textbf{Backward interaction update ($\psi$):}
    \FOR{$m=1,\dots,M_{\mathrm{int}}$}
    \STATE Sample one $N$-particle backward trajectory $\widehat{\mathbf X}_{0:K}$, compute $f(\widehat X_{t_k}^{(i)},t_k,\widehat\rho_{t_k}^N)$ and $F(\widehat X_{t_k}^{(i)},t_k,\widehat\rho_{t_k}^N)$ for all $i,k$ using the analogue of \eqref{eq:empirical-pairwise-interactions}.
    \STATE Update $(\tilde f_\psi,\tilde F_\psi)$ once on the fixed trajectory $\widehat{\mathbf X}_{0:K}$ using $\mathcal{L}_{\mathrm{int}}^{N,\mathrm{bwd}}$ in \eqref{eq:int-loss-bwd}.
    \ENDFOR
    \ENDWHILE
  \end{algorithmic}
\end{algorithm}

\subsection{Complexity Analysis}\label{sec:complexity}

We compare Algorithm~\ref{alg:fourstep_alt_resample} to a baseline that evaluates $f$ and $F$ analytically at every drift update.
We count the cost of one directional pass, since the forward and backward passes are symmetric.
Let $M_{\mathrm{SDE}}$ and $M_{\mathrm{int}}$ denote the numbers of drift-update and interaction-update steps in one such pass.
Let $C_{\mathrm{int}}^{\mathrm{ana}}$ be the cost of one analytical evaluation of the nonlocal interaction terms along an $N$-particle trajectory.
Let $C_{\mathrm{traj}}^{\mathrm{NN}}$, $C_{\mathrm{pol}}^{\mathrm{NN}}$, and $C_{\mathrm{int}}^{\mathrm{NN}}$ be, respectively, the neural costs of generating one trajectory, updating the forward/backward bridge networks, and performing one supervised interaction-network update.

In Algorithm~\ref{alg:fourstep_alt_resample}, each drift update generates a trajectory and updates the FBSDE networks using the learned surrogates.
Each interaction update generates a trajectory, computes analytical labels, and updates the interaction network.
The proposed cost per directional pass is therefore
\begin{align}
  {C}_{\mathrm{prop}}
   & =
  M_{\mathrm{SDE}}(C_{\mathrm{traj}}^{\mathrm{NN}}+C_{\mathrm{pol}}^{\mathrm{NN}})
  +M_{\mathrm{int}}(C_{\mathrm{traj}}^{\mathrm{NN}}+C_{\mathrm{int}}^{\mathrm{ana}}+C_{\mathrm{int}}^{\mathrm{NN}}).
\end{align}
The direct-computation baseline performs $M_{\mathrm{SDE}}$ drift updates with analytical evaluation of $f$ and $F$.
Each step generates a trajectory and updates the FBSDE networks as above, but also evaluates the nonlocal terms analytically during the drift update.
Its cost per directional pass is
\begin{equation}
  {C}_{\mathrm{direct}}
  =
  M_{\mathrm{SDE}}(C_{\mathrm{traj}}^{\mathrm{NN}}+C_{\mathrm{pol}}^{\mathrm{NN}}+C_{\mathrm{int}}^{\mathrm{ana}}).
\end{equation}
We say that Algorithm~\ref{alg:fourstep_alt_resample} is \emph{efficient relative to the direct-computation baseline} if it has lower cost for one directional pass, namely
\begin{equation}\label{eq:efficiency-condition}
  C_{\mathrm{prop}}<C_{\mathrm{direct}}.
\end{equation}

\begin{proposition}[Efficiency condition]\label{prop:efficiency-condition}
  Under the cost model above, Algorithm~\ref{alg:fourstep_alt_resample} is efficient relative to the direct-computation baseline if and only if
  \begin{equation}\label{eq:efficiency-algebraic-condition}
    \frac{M_{\mathrm{int}}}{M_{\mathrm{SDE}}}<\frac{1}{1+\frac{C^{\mathrm{NN}}}{C_{\mathrm{int}}^{\mathrm{ana}}}},
  \end{equation}
  where $C^{\mathrm{NN}}\coloneqq C_{\mathrm{traj}}^{\mathrm{NN}}+C_{\mathrm{int}}^{\mathrm{NN}}$.
\end{proposition}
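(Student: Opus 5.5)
The plan is to write the inequality \eqref{eq:efficiency-condition} out in terms of the cost model and simplify it by algebra alone. Throughout, we assume all costs are positive and $M_{\mathrm{SDE}}>0$, $C_{\mathrm{int}}^{\mathrm{ana}}>0$; these are implicit in the cost model and are needed for the divisions below.

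First substitute the expressions for $C_{\mathrm{prop}}$ and $C_{\mathrm{direct}}$. The term $M_{\mathrm{SDE}}(C_{\mathrm{traj}}^{\mathrm{NN}}+C_{\mathrm{pol}}^{\mathrm{NN}})$ appears on both sides, so it cancels. Condition \eqref{eq:efficiency-condition} is therefore equivalent to
\begin{equation}
  M_{\mathrm{int}}\bigl(C_{\mathrm{traj}}^{\mathrm{NN}}+C_{\mathrm{int}}^{\mathrm{ana}}+C_{\mathrm{int}}^{\mathrm{NN}}\bigr) < M_{\mathrm{SDE}}\,C_{\mathrm{int}}^{\mathrm{ana}}.
\end{equation}
Using the definition $C^{\mathrm{NN}}=C_{\mathrm{traj}}^{\mathrm{NN}}+C_{\mathrm{int}}^{\mathrm{NN}}$, the left-hand side is $M_{\mathrm{int}}(C^{\mathrm{NN}}+C_{\mathrm{int}}^{\mathrm{ana}})$.

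Next divide both sides by the positive quantity $M_{\mathrm{SDE}}(C^{\mathrm{NN}}+C_{\mathrm{int}}^{\mathrm{ana}})$. Because this quantity is positive, the direction of the inequality is preserved and the step is an equivalence. This gives
\begin{equation}
  \frac{M_{\mathrm{int}}}{M_{\mathrm{SDE}}} < \frac{C_{\mathrm{int}}^{\mathrm{ana}}}{C^{\mathrm{NN}}+C_{\mathrm{int}}^{\mathrm{ana}}}.
\end{equation}
Dividing numerator and denominator of the right-hand side by $C_{\mathrm{int}}^{\mathrm{ana}}$ yields exactly \eqref{eq:efficiency-algebraic-condition}.

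Every step is a reversible equivalence (cancelling a common term, dividing by a positive number), so the argument proves both directions of the ``if and only if'' simultaneously. No step is a real obstacle. The only care needed is to state the positivity assumptions explicitly, since they justify the division and hence the equivalence.
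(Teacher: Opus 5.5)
Your proposal is correct and follows the paper's argument. The paper likewise cancels the shared term $M_{\mathrm{SDE}}(C_{\mathrm{traj}}^{\mathrm{NN}}+C_{\mathrm{pol}}^{\mathrm{NN}})$, but instead of dividing it factors the cost gap as $M_{\mathrm{SDE}}C_{\mathrm{int}}^{\mathrm{ana}}\bigl[1-\tfrac{M_{\mathrm{int}}}{M_{\mathrm{SDE}}}\bigl(1+C^{\mathrm{NN}}/C_{\mathrm{int}}^{\mathrm{ana}}\bigr)\bigr]$ and reads off its sign, which is the same algebra under the same positivity assumptions (including $C^{\mathrm{NN}}+C_{\mathrm{int}}^{\mathrm{ana}}>0$, which you state explicitly and the paper uses tacitly).
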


\begin{proof}[Proof of Proposition~\ref{prop:efficiency-condition}]
  Subtracting the two directional-pass costs gives
  \begin{align}
    {C}_{\mathrm{direct}}-{C}_{\mathrm{prop}}
     & =
    M_{\mathrm{SDE}}C_{\mathrm{int}}^{\mathrm{ana}}
    -M_{\mathrm{int}}(C_{\mathrm{int}}^{\mathrm{ana}}+C_{\mathrm{traj}}^{\mathrm{NN}}+C_{\mathrm{int}}^{\mathrm{NN}}) \notag \\
     & =
    (M_{\mathrm{SDE}}-M_{\mathrm{int}})C_{\mathrm{int}}^{\mathrm{ana}}
    -M_{\mathrm{int}}C^{\mathrm{NN}} \notag                                                                                  \\
     & =
    M_{\mathrm{SDE}}
    \left[
      \left(1-\frac{M_{\mathrm{int}}}{M_{\mathrm{SDE}}}\right)C_{\mathrm{int}}^{\mathrm{ana}}
      -\frac{M_{\mathrm{int}}}{M_{\mathrm{SDE}}}C^{\mathrm{NN}}
      \right] \notag                                 \\
     & =
    M_{\mathrm{SDE}}C_{\mathrm{int}}^{\mathrm{ana}}
    \left[
      1
      -\frac{M_{\mathrm{int}}}{M_{\mathrm{SDE}}}
      \left(
      1+\frac{C^{\mathrm{NN}}}{C_{\mathrm{int}}^{\mathrm{ana}}}
      \right)
      \right].
    \label{eq:cost-gap}
  \end{align}
  Since $M_{\mathrm{SDE}}>0$ and $C_{\mathrm{int}}^{\mathrm{ana}}>0$, the last expression is positive exactly when \eqref{eq:efficiency-algebraic-condition} holds.
  Hence \eqref{eq:efficiency-algebraic-condition} is necessary and sufficient for \eqref{eq:efficiency-condition}.
\end{proof}

\begin{remark}[Regimes for the efficiency condition]\label{rem:efficiency-regimes}
  Large populations and infrequent interaction updates make the efficiency condition less restrictive.
  In \eqref{eq:efficiency-algebraic-condition}, infrequent interaction updates reduce the left-hand side $M_{\mathrm{int}}/M_{\mathrm{SDE}}$.
  To examine the population scaling, suppose that the analytical interaction is evaluated by all-pairs sums and that the surrogate is an MLP.
  Let $N$ be the number of particles, $K$ the number of time steps, $d$ the state dimension, and $d_{\mathrm{out}}$ the output dimension of the interaction terms ($d$ for the drift $f$, $1$ for the cost $F$).
  The neural networks are MLPs with $L_{\mathrm{NN}}$ layers and hidden width $H_{\mathrm{NN}}$.
  Then, the dominant costs are approximately
  \begin{align}
    C_{\mathrm{int}}^{\mathrm{ana}}
     & \approx k_{\mathrm{ana}} K N^2 (d+d_{\mathrm{out}}), \\
    C^{\mathrm{NN}}
     & \approx k_{\mathrm{NN}} K N
    \bigl(dH_{\mathrm{NN}}+(L_{\mathrm{NN}}-1)H_{\mathrm{NN}}^2+H_{\mathrm{NN}}d_{\mathrm{out}}\bigr),
  \end{align}
  with hardware- and implementation-dependent constants $k_{\mathrm{ana}}$ and $k_{\mathrm{NN}}$.
  Thus
  \begin{equation}
    \frac{C^{\mathrm{NN}}}{C_{\mathrm{int}}^{\mathrm{ana}}}
    \approx
    \frac{
      k_{\mathrm{NN}}\bigl(dH_{\mathrm{NN}}+(L_{\mathrm{NN}}-1)H_{\mathrm{NN}}^2+H_{\mathrm{NN}}d_{\mathrm{out}}\bigr)
    }{
      k_{\mathrm{ana}}N(d+d_{\mathrm{out}})
    }.
  \end{equation}
  For a fixed architecture, this ratio decreases as $N$ grows, increasing the right-hand side of \eqref{eq:efficiency-algebraic-condition}.
\end{remark}

\section{Stability Analysis}\label{sec:theory}

This section analyzes stability of the FBSDE formulation of the MFSB.
Stability here means that the error between the true system driven by $(f,F)$
and the learned system driven by $(\tilde f,\tilde F)$ can be controlled by the
surrogate approximation errors.
The analysis is carried out at the mean-field level; the finite-particle error
from replacing the marginal laws by empirical measures in
Algorithm~\ref{alg:fourstep_alt_resample} is not included.
We state the assumptions and the total energy estimate in this section, while
the auxiliary estimates and proofs are collected in Appendix~\ref{app:proofs}.

\subsection{Setup}
\label{subsec:stability_setup}

The error analysis compares the ``true'' system driven by $f$ and $F$ with the ``learned'' system driven by the neural surrogates.
Both systems live on a common filtered probability space $(\Omega,\mathcal F,(\mathcal F_t)_{t\in[0,T]},\mathbb P)$ carrying a $d$-dimensional Brownian motion $W$, and all stochastic processes below are adapted to this filtration.

\begin{assumption}[Consistent MFSB solution]
  \label{ass:global-consistency}
  There exists a collection of stochastic processes
  \[
    (X,Y,Z)\quad\text{and}\quad(\widehat X,\widehat Y,\widehat Z),
  \]
  defined on a common filtered probability space, such that:
  \begin{enumerate}[label=(\roman*)]
    \item $(X,Y,Z)$ is the unique strong solution of the forward-time FBSDE
          \eqref{eq:fbsde-forward-full-x}--\eqref{eq:fbsde-forward-full-y}
          driven by the true interaction terms $(f,F)$ with initial condition
          $X_0\sim\rho_0$.

    \item $(\widehat X,\widehat Y,\widehat Z)$ is the unique strong solution of the backward-time FBSDE
          \eqref{eq:fbsde-backward-full-x}--\eqref{eq:fbsde-backward-full-yhat}
          driven by the same interaction terms $(f,F)$ with terminal condition
          $\widehat X_T\sim\rho_T$.

    \item The forward and backward processes are \emph{globally consistent} in the sense that
          they induce the same family of time-marginal distributions $\{\rho_t\}_{t\in[0,T]}$:
          \[
            X_t\sim\rho_t,
            \qquad
            \widehat X_t\sim\rho_t,
            \qquad \forall\,t\in[0,T],
          \]
          and the associated potentials satisfy the Schr\"odinger factorization
          \[
            \rho_t(x)=\exp\bigl(-Y(x,t)-\widehat Y(x,t)\bigr).
          \]

    \item The logarithmic potentials are taken in the endpoint representative
          \[
            Y(\cdot,0)=-\log\rho_0 \quad \rho_0\text{-a.e.},
            \qquad
            \widehat Y(\cdot,T)=-\log\rho_T \quad \rho_T\text{-a.e.}.
          \]
          Together with the factorization in (iii), this implies
          \[
            \widehat Y(\cdot,0)=0 \quad \rho_0\text{-a.e.},
            \qquad
            Y(\cdot,T)=0 \quad \rho_T\text{-a.e.}.
          \]
  \end{enumerate}
\end{assumption}

\begin{assumption}[Consistent learned solution]
  \label{ass:global-consistency-learned}
  There exists a collection of stochastic processes
  \[
    (\tilde X,\tilde Y,\tilde Z)\quad\text{and}\quad
    (\tilde{\widehat X},\tilde{\widehat Y},\tilde{\widehat Z}),
  \]
  defined on the same filtered probability space, such that:
  \begin{enumerate}[label=(\roman*)]
    \item $(\tilde X,\tilde Y,\tilde Z)$ is the unique strong solution of the forward-time FBSDE
          \eqref{eq:fbsde-forward-full-x}--\eqref{eq:fbsde-forward-full-y}
          with interaction terms $(\tilde f,\tilde F)$,
          driven by the same Brownian motion as $(X,Y,Z)$,
          and with the coupled initial condition $\tilde X_0 = X_0$ a.s.\ (so $\tilde X_0\sim\rho_0$).

    \item $(\tilde{\widehat X},\tilde{\widehat Y},\tilde{\widehat Z})$ is the unique strong solution
          of the backward-time FBSDE
          \eqref{eq:fbsde-backward-full-x}--\eqref{eq:fbsde-backward-full-yhat}
          with interaction terms $(\tilde f,\tilde F)$,
          driven by the same Brownian motion as $(\widehat X,\widehat Y,\widehat Z)$,
          and with the coupled terminal condition $\tilde{\widehat X}_T = \widehat X_T$ a.s.\ (so $\tilde{\widehat X}_T\sim\rho_T$).

    \item Defining the learned time-marginal density $\tilde\rho_t$ by
          \[
            \tilde X_t\sim\tilde\rho_t,
            \qquad
            \tilde{\widehat X}_t\sim\tilde\rho_t,
            \qquad \forall\,t\in[0,T],
          \]
          the learned forward and backward processes are assumed to be consistent in the sense that
          the learned potentials satisfy
          \[
            \tilde\rho_t(x)
            =
            \exp\bigl(-\tilde Y(x,t)-\tilde{\widehat Y}(x,t)\bigr),
            \qquad \forall\,t\in[0,T].
          \]

    \item The learned logarithmic potentials are taken in the same endpoint representative:
          \[
            \tilde Y(\cdot,0)=-\log\rho_0 \quad \rho_0\text{-a.e.},
            \qquad
            \tilde{\widehat Y}(\cdot,T)=-\log\rho_T \quad \rho_T\text{-a.e.}.
          \]
          Together with (i)--(iii), this implies
          \[
            \tilde{\widehat Y}(\cdot,0)=0 \quad \rho_0\text{-a.e.},
            \qquad
            \tilde Y(\cdot,T)=0 \quad \rho_T\text{-a.e.}.
          \]
  \end{enumerate}
\end{assumption}

\begin{remark}[Consistency assumptions]
  Assumption~\ref{ass:global-consistency} states that a globally consistent MFSB solution exists;
  existence in variational settings is established by Backhoff et al.~\cite{Backhoff2020Mean}.
  Assumption~\ref{ass:global-consistency-learned} describes the learned system after the interaction terms have been replaced by their surrogates and the resulting forward and backward components are mutually consistent.
  Assumptions~\ref{ass:global-consistency}(iv) and \ref{ass:global-consistency-learned}(iv) fix the endpoint representative of the FBSDE potentials, and thereby make the comparison of the potential variables well defined.
  Indeed, the endpoint factorization \eqref{eq:bc-psi} determines only the sum \(Y+\widehat Y\), so one degree of freedom remains in the endpoint values of the logarithmic potentials.
  This representative is the one singled out by the TD boundary losses
  \eqref{eq:td-loss-bwd}--\eqref{eq:td-loss-fwd}; when these boundary residuals
  are small and the learned factorization is consistent, the learned potentials
  are expected to satisfy Assumption~\ref{ass:global-consistency-learned}(iv).
\end{remark}

We compare the true and learned solutions through the forward-time differences
\[
  \Delta X_t = X_t-\tilde X_t,\quad
  \Delta Y_t = Y_t-\tilde Y_t,\quad
  \Delta Z_t = Z_t-\tilde Z_t,
\]
and the forward total energy error
\begin{equation}\label{eq:total-energy}
  \mathcal E^{\mathrm{fwd}}
  \coloneqq
  \sup_{t\in[0,T]}\E\|\Delta X_t\|^2
  +\sup_{t\in[0,T]}\E|\Delta Y_t|^2
  +\int_0^T\E\|\Delta Z_t\|^2\,dt.
\end{equation}
Analogously, we define the backward-time differences
\[
  \Delta\widehat X_s=\widehat X_s-\tilde{\widehat X}_s,\quad
  \Delta\widehat Y_s=\widehat Y_s-\tilde{\widehat Y}_s,\quad
  \Delta\widehat Z_s=\widehat Z_s-\tilde{\widehat Z}_s,
\]
and the backward total energy error $\mathcal E^{\mathrm{bwd}}$ as
\begin{align}\label{eq:total-energy-backward}
  \mathcal E^{\mathrm{bwd}} \coloneqq \sup_{s\in[0,T]}\E\|\Delta \widehat X_s\|^2 + \sup_{s\in[0,T]}\E|\Delta \widehat Y_s|^2 + \int_0^T \E\|\Delta \widehat Z_s\|^2\,\di s.
\end{align}

We measure the surrogate quality by the maximum deviations
\begin{equation}\label{eq:coeferr_JP}
  \varepsilon_f \coloneqq \sup_{x,t} \norm{f(x,t,\rho_t)-\tilde f(x,t)}, \qquad
  \varepsilon_F \coloneqq \sup_{x,t} |F(x,t,\rho_t)-\tilde F(x,t)|,
\end{equation}
where $\rho_t$ is the marginal of the true solution.

We bound $\mathcal E^{\mathrm{fwd}}$ and $\mathcal E^{\mathrm{bwd}}$ explicitly in terms of the surrogate errors.
We impose the following regularity assumptions on the interaction functions and the FBSDE solutions.

\begin{assumption}[Lipschitz interaction terms]\label{ass:regularity-0}
  The functions $f,\tilde f$ and $F,\tilde F$
  are uniformly Lipschitz in the spatial variable.
  Namely, there exist constants $L_f>0$ and $L_F>0$ such that, for all $t\in[0,T]$,
  all probability densities $\rho_t\in\mathcal{P}(\R^d)$, and all $x,x'\in\R^d$,
  \begin{align}
    \|f(x,t,\rho_t)-f(x',t,\rho_t)\| & \le L_f\|x-x'\|, \\
    |F(x,t,\rho_t)-F(x',t,\rho_t)|   & \le L_F\|x-x'\|,
  \end{align}
  and the same bounds hold for $\tilde f$ and $\tilde F$ with the same constants.
\end{assumption}

\begin{assumption}[Uniform gradient bounds]
  \label{ass:grad-bound-global}
  There exists a positive constant $C_Z$ such that the following uniform bounds hold:
  \begin{align}
     & \sup_{x,t}\|\nabla_x Y(x,t)\| \le C_Z,
    \qquad
    \sup_{x,t}\|\nabla_x \tilde Y(x,t)\| \le C_Z, \label{eq:grad-bound-fwd} \\
     & \sup_{x,s}\|\nabla_x \widehat Y(x,s)\| \le C_Z,
    \qquad
    \sup_{x,s}\|\nabla_x \tilde{\widehat Y}(x,s)\| \le C_Z. \label{eq:grad-bound-bwd}
  \end{align}
\end{assumption}

\begin{remark}[Regularity assumptions]
  For kernel interactions of the form \eqref{eq:nonlocal-form},
  Assumption~\ref{ass:regularity-0} follows when the kernels are uniformly
  Lipschitz in the evaluation variable.
  For the neural surrogates, the same condition corresponds to working in a
  network class with uniformly controlled Lipschitz constants, for instance by
  bounding the weights or using spectral normalization.
  Assumption~\ref{ass:grad-bound-global} is an explicit stability assumption on
  the logarithmic Schr\"odinger potentials, ruling out blow-up of their spatial
  gradients in the estimates below.
\end{remark}

\subsection{Results}

Fix parameters $\alpha,\beta>0$, $\eta\in(0,2)$, and $\gamma>0$, and define
\begin{align}
   & C_{XX}\coloneqq \frac{(L_f)^2}{\alpha} + 3\alpha,\quad
  C_{XZ}\coloneqq \frac{\sigma^2}{\alpha},\quad
  C_{X,\varepsilon_f}\coloneqq \frac{1}{\alpha},                                      \notag  \\
   & C_{YY}\coloneqq \frac{9\beta}{4},\quad
  C_{YZ}\coloneqq 1 + \frac{4\sigma^2 C_Z^2}{\beta},\quad
  C_{YX}\coloneqq \frac{(L_F)^2}{\beta},                                               \notag \\
   & C_{Y,\varepsilon_F}\coloneqq \frac{1}{\beta},\quad
  C_{ZX}\coloneqq \frac{2(L_F)^2}{\gamma},\quad
  C_{ZY}\coloneqq \frac{2\sigma^2 C_Z^2}{\eta} + \gamma,\quad
  C_{Z,\varepsilon_F}\coloneqq \frac{2}{\gamma},\notag                                        \\
   & \kappa\coloneqq \frac{1}{1-\eta/2},\quad
  E_X\coloneqq e^{C_{XX}T},\quad
  E_Y\coloneqq e^{C_{YY}T},\notag                                                             \\
   & a_X\coloneqq E_X\,C_{XZ}\,\kappa\,C_{ZX}\,T,\quad
  a_Y\coloneqq E_X\,C_{XZ}\,\kappa\,C_{ZY}\,T,                                         \notag \\
   & b_X\coloneqq E_Y\Big(C_{YX}\,T + C_{YZ}\,\kappa\,C_{ZX}\,T\Big),\quad
  b_Y\coloneqq E_Y\, C_{YZ}\,\kappa\,C_{ZY}\,T.
  \label{eq:coeff_a_b_def}
\end{align}
Assume
\begin{align}
  D_{\mathrm{det}} \coloneqq (1-a_X)(1-b_Y) - a_Y b_X >0,
  \label{eq:Ddet_def}
\end{align}
and
\begin{align}\label{eq:diag_pos_def}
  1-a_X > 0,
  \qquad
  1-b_Y > 0.
\end{align}
Under these conditions, define
\begin{align}
  \begin{aligned}
    \mathcal C_E
     & \coloneqq
    \frac{1}{D_{\mathrm{det}}}\Bigg[
                                \Big(1+\kappa C_{ZX}T\Big)(1-b_Y)
                                + \Big(1+\kappa C_{ZY}T\Big)b_X
                                \Bigg]\,E_X\Big(C_{X,\varepsilon_f} + C_{XZ}\kappa (1 + C_{Z,\varepsilon_F})\Big)          \\
     & + \frac{1}{D_{\mathrm{det}}}\Bigg[
                                     \Big(1+\kappa C_{ZX}T\Big)a_Y
                                     + \Big(1+\kappa C_{ZY}T\Big)(1-a_X)
                                     \Bigg]\,E_Y\Big(1 + C_{Y,\varepsilon_F} + C_{YZ}\kappa (1 + C_{Z,\varepsilon_F})\Big) \\
     & + \kappa (1 + C_{Z,\varepsilon_F}).
  \end{aligned}\label{eq:CE_def}
\end{align}
The main estimate controls the combined forward--backward error by the surrogate error.

\begin{theorem}[Total energy bound]
  \label{thm:combined_total_energy_bound}
  Assume that Assumptions~\ref{ass:global-consistency},
  \ref{ass:global-consistency-learned},
  \ref{ass:regularity-0}, and \ref{ass:grad-bound-global} hold.
  Assume also \eqref{eq:Ddet_def}--\eqref{eq:diag_pos_def}.
  Then, the combined forward--backward energy satisfies
  \begin{align}\label{eq:combined_energy_bound}
    \mathcal E^{\mathrm{fwd}}+\mathcal E^{\mathrm{bwd}}
    \le
    2T\,\mathcal C_E
    \bigl(\varepsilon_f^2+\varepsilon_F^2\bigr).
  \end{align}
\end{theorem}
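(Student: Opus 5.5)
The plan is to bound $\mathcal E^{\mathrm{fwd}}$ and $\mathcal E^{\mathrm{bwd}}$ separately by $T\,\mathcal C_E(\varepsilon_f^2+\varepsilon_F^2)$ and then add them. The backward case is the time reversal $s\mapsto T-s$ of the forward one. Under Assumptions~\ref{ass:global-consistency}--\ref{ass:global-consistency-learned}, the backward system \eqref{eq:fbsde-backward-full-x}--\eqref{eq:fbsde-backward-full-yhat} has the same structure as \eqref{eq:fbsde-forward-full-x}--\eqref{eq:fbsde-forward-full-y}, with $f$ replaced by $-f$. The coupled endpoint is $\tilde{\widehat X}_T=\widehat X_T$, and the endpoint values of $\widehat Y,\tilde{\widehat Y}$ are both $-\log\rho_T$. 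It therefore suffices to prove the forward estimate with constants depending only on $L_f,L_F,\sigma,C_Z,T$ and the free parameters; this gives the factor $2$.

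For the forward system I would derive three coupled inequalities, one for each component.

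\emph{(X-step).} Apply It\^o to $\|\Delta X_t\|^2$ with $\Delta X_0=0$, and note that the Brownian increments cancel. Split the drift difference as
\[
f(X_t,t,\rho_t)-\tilde f(\tilde X_t,t)=\bigl[\tilde f(X_t,t)-\tilde f(\tilde X_t,t)\bigr]+\bigl[f(X_t,t,\rho_t)-\tilde f(X_t,t)\bigr],
\]
bounding the first bracket by $L_f\|\Delta X_t\|$ via Assumption~\ref{ass:regularity-0} and the second by $\varepsilon_f$. Add the term $\sigma\Delta Z_t$. Young's inequality with parameter $\alpha$ gives
\[
\frac{d}{dt}\E\|\Delta X_t\|^2\le C_{XX}\E\|\Delta X_t\|^2+C_{XZ}\E\|\Delta Z_t\|^2+C_{X,\varepsilon_f}\varepsilon_f^2 .
\]
Gr\"onwall then yields $\sup_t\E\|\Delta X_t\|^2\le E_X\bigl(C_{XZ}\int_0^T\E\|\Delta Z\|^2+C_{X,\varepsilon_f}T\varepsilon_f^2\bigr)$.

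\emph{(Y-step).} Apply It\^o to $|\Delta Y_t|^2$, starting from $\Delta Y_0=0$ by Assumptions~\ref{ass:global-consistency}(iv) and \ref{ass:global-consistency-learned}(iv). Write $\tfrac12(\|Z\|^2-\|\tilde Z\|^2)=\tfrac12(Z+\tilde Z)^\top\Delta Z$, where $\|Z\|,\|\tilde Z\|\le\sigma C_Z$ by Assumption~\ref{ass:grad-bound-global}. The $F$-difference is split exactly as in the X-step. The quadratic variation contributes $\E\|\Delta Z\|^2$, which accounts for the ``$1+$'' in $C_{YZ}$. Young with $\beta$ and Gr\"onwall give
\[
\sup_t\E|\Delta Y_t|^2\le E_Y\Bigl(C_{YX}T\sup\E\|\Delta X\|^2+C_{YZ}\textstyle\int\E\|\Delta Z\|^2+C_{Y,\varepsilon_F}T\varepsilon_F^2\Bigr).
\]

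\emph{(Z-step).} Treat the $Y$-equation backward from $T$, where $\Delta Y_T=0$ by (iv). It\^o on $|\Delta Y_t|^2$ integrated over $[t,T]$ isolates $\int\E\|\Delta Z\|^2$ on the left. The Lipschitz-in-$Z$ generator term is absorbed with weight $\eta/2$, which produces $\kappa=1/(1-\eta/2)$. The $F$ terms are handled with $\gamma$. The result is
\[
\int_0^T\E\|\Delta Z\|^2\le\kappa\Bigl(C_{ZX}T\sup\E\|\Delta X\|^2+C_{ZY}T\sup\E|\Delta Y|^2+(1+C_{Z,\varepsilon_F})T(\varepsilon_f^2+\varepsilon_F^2)\Bigr).
\]

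Substituting the Z-step into the X- and Y-steps gives a $2\times2$ linear system in $(u,v)=(\sup\E\|\Delta X\|^2,\sup\E|\Delta Y|^2)$:
\[
(1-a_X)u-a_Yv\le r_X,\qquad -b_Xu+(1-b_Y)v\le r_Y,
\]
with right-hand sides $r_X=E_X\bigl(C_{X,\varepsilon_f}+C_{XZ}\kappa(1+C_{Z,\varepsilon_F})\bigr)T\varepsilon^2$ and a similar $r_Y$, where $\varepsilon^2=\varepsilon_f^2+\varepsilon_F^2$. The conditions \eqref{eq:Ddet_def}--\eqref{eq:diag_pos_def} make the matrix a nonsingular M-matrix, so its inverse is entrywise nonnegative. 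Cramer's rule then gives $u\le[(1-b_Y)r_X+a_Yr_Y]/D_{\det}$ and $v\le[b_Xr_X+(1-a_X)r_Y]/D_{\det}$. Feeding these back into the Z-step and summing $u+v+\int\E\|\Delta Z\|^2$ produces the weights $(1+\kappa C_{ZX}T)$ and $(1+\kappa C_{ZY}T)$, and hence exactly $T\mathcal C_E\varepsilon^2$.

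I expect the main obstacle to be the Z-step. It is a genuinely backward estimate, yet it must be combined with sup-in-time forward bounds. The stochastic integrals need a localization argument to have zero expectation, which Assumption~\ref{ass:grad-bound-global} makes harmless. Matching the precise constants ($C_{YY}=9\beta/4$, the $2$'s in $C_{ZX},C_{ZY}$) will require choosing the Young splittings carefully, and I would tune those at the end to reproduce \eqref{eq:coeff_a_b_def}.
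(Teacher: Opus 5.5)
Your proposal is correct and follows the paper's proof essentially step for step: the same It\^o--Young--Gr\"onwall estimates for $\Delta X$ and $\Delta Y$, the same $\Delta Z$ energy estimate with $\eta/2$-absorption, the same $2\times 2$ small-gain system solved via its nonnegative inverse, and the same time reversal for the backward part before adding the two bounds. The differences are cosmetic. The paper carries the endpoint errors $\E|\Delta Y_0|^2,\E|\Delta Y_T|^2$ through the forward and backward bounds and removes them only at the end, whereas you set them to zero from the start, which gives a bound no larger than $T\mathcal C_E(\varepsilon_f^2+\varepsilon_F^2)$ rather than exactly it. In the reversed system you should also record that $\Delta\widehat Y_0=0$; this follows from $\widehat Y(\cdot,0)=\tilde{\widehat Y}(\cdot,0)=0$ $\rho_0$-a.e.\ and $\tilde{\widehat X}_0\sim\tilde\rho_0=\rho_0$.
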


The proof of Theorem~\ref{thm:combined_total_energy_bound} has three steps.
First, the forward-time FBSDE gives differential inequalities for
$\E\|\Delta X_t\|^2$ and $\E|\Delta Y_t|^2$, together with an energy estimate for
$\int_0^T\E\|\Delta Z_t\|^2\,\di t$.
Solving these estimates yields the forward bound in
Lemma~\ref{thm:explicit_total_error_bound}.
Second, the same argument is applied to the backward-time FBSDE after reversing
time, giving Lemma~\ref{thm:explicit_total_error_bound_backward_compact}.
Finally, the two estimates are added, leaving the four endpoint errors collected
in the intermediate bounds; these endpoint errors vanish under Assumptions~\ref{ass:global-consistency}(iv) and
\ref{ass:global-consistency-learned}(iv).
These estimates and their proofs are collected in Appendix~\ref{app:proofs}.

\begin{remark}[Small-gain condition]
  Conditions \eqref{eq:Ddet_def}--\eqref{eq:diag_pos_def} are the small-gain
  requirements needed to close the two coupled estimates for
  $\sup_t\E\|\Delta X_t\|^2$ and $\sup_t\E|\Delta Y_t|^2$.
  After the $\Delta Z$ estimate is substituted, these two quantities satisfy a
  two-by-two system of inequalities, displayed in
  \eqref{eq:ineq_Xstar_closed}--\eqref{eq:ineq_Ystar_closed}.
  The conditions state that the diagonal margins in this system are positive
  and that the system can be solved with positive control of the error.
  For fixed Young parameters $\alpha,\beta,\eta,\gamma$, each feedback
  coefficient $a_X,a_Y,b_X,b_Y$ is proportional to $T$, and hence the
  conditions are satisfied on sufficiently short horizons.
  For a fixed horizon, with $C_Z$ fixed, the same conditions are satisfied when
  $L_f$, $L_F$, and $\sigma$ are sufficiently small, with $\gamma$ chosen so
  that both $\gamma$ and $L_F^2/\gamma$ are small.
\end{remark}

\section{Numerical Experiments}\label{sec:experiments}

We evaluate the proposed method on two navigation tasks in $d=2$ and one opinion-dynamics task in $d=10$, all with nonlocal interactions:
\begin{enumerate}
  \item Gaussian Mixture Model (GMM): transport from an initial distribution to an 8-component Gaussian mixture while avoiding obstacles, with an isotropic pairwise drift.
  \item V-neck bottleneck: transport between Gaussian distributions through a narrow V-shaped channel, with a directional congestion cost.
  \item Opinion dynamics: transport between Gaussian opinion distributions under a smooth bounded-confidence drift interaction.
\end{enumerate}
We compare three methods for evaluating $f$ and $F$ along particle trajectories:
(i) baseline FBSDE training with direct analytical evaluation at every step;
(ii) the proposed Algorithm~\ref{alg:fourstep_alt_resample}, which substitutes the learned surrogates $\tilde f$ and $\tilde F$;
(iii) a random-batch (RB) approximation, which retains the analytical formula but evaluates the empirical sum over a random subset of agents at each step.
For each interaction kernel $k_\cdot\in\{k_f,k_F\}$, RB replaces the full empirical average $\frac{1}{N}\sum_{j=1}^N k_\cdot(x,X_j)$
by the mini-batch average $\frac{1}{M_{\mathrm{RB}}}\sum_{j\in\mathcal{B}} k_\cdot(x,X_j)$,
with $\mathcal{B}\subset\{1,\dots,N\}$ a uniformly sampled subset of size $M_{\mathrm{RB}}\ll N$.
Each experiment compares the three methods via particle trajectories, training loss versus wall-clock time, and sensitivity to the interaction strength.
For the last comparison, we summarize each trajectory ensemble by the kernel-affinity diagnostic
\begin{equation}\label{eq:kernel_affinity_diagnostic}
  A_k(t)
  =
  \frac{1}{N(N-1)}
  \sum_{i\ne j}
  k\!\left(X_t^{(i)},X_t^{(j)}\right).
\end{equation}
Here $\{X_t^{(i)}\}_{i=1}^N$ are the particle positions and $k$ is the nonnegative kernel associated with the interaction in each experiment.
A large value of $A_k(t)$ indicates strong average pairwise affinity under this kernel.

All FBSDE networks ($\tilde Y_\theta, \tilde Z_\theta, \tilde{\widehat Y}_\phi, \tilde{\widehat Z}_\phi$) are MLPs with three hidden layers of 128 units and SiLU activations, optimized with AdamW at learning rates $10^{-3}$ for the value networks and $5\times 10^{-4}$ for the gradient networks.
The surrogate networks $\tilde f_\psi$ and $\tilde F_\psi$ use the same architecture with learning rate $5\times 10^{-4}$.
All loss weights are set to $\lambda_{\mathrm{IPF}}=\lambda_{\mathrm{TD}}=\lambda_{\mathrm{FK}}=\lambda_{\mathrm{int}}=1.0$.
Algorithm~\ref{alg:fourstep_alt_resample} uses $M_{\mathrm{SDE}}=250$ drift-update steps per outer iteration in all experiments.
Computational environments are described in Appendix~\ref{app:computational_environment}.

\subsection{GMM Navigation}\label{subsec:gmm}

\paragraph{Setup}
We use $N=1000$ particles, horizon $T=1.0$, and $K=100$ time steps ($\Delta t=0.01$).
The initial distribution is $\rho_0=\mathcal{N}(0, I)$ and the terminal distribution $\rho_T$ is an 8-component Gaussian mixture arranged on a circle.
The nonlocal drift uses the isotropic Gaussian kernel
\begin{equation}
  k(x,y) = \exp\!\left(-\frac{\|x-y\|^2}{2\sigma_{\mathrm{int}}^2}\right),
\end{equation}
giving
\begin{equation}
  f(x,t,\rho_t) = w \int_{\R^d} k(x,y)(y-x)\rho(y,t) \di y,
\end{equation}
with $w=2.0$ and $\sigma_{\mathrm{int}}=2.0$, which attracts each agent toward its neighbors.
The running cost
\begin{equation}
  F(x,t,\rho_t) = 1500 \sum_{m=1}^3 \max(0, 1.5 - \|x - c_m\|)^6,
\end{equation}
with centers $c_m \in \{(6,6),(6,-6),(-6,-6)\}$, penalizes proximity to three fixed obstacles.

\paragraph{Results}
Figure~\ref{fig:gmm_trajectory_comparison} shows particle trajectories for both methods: Figure~\ref{fig:gmm_traj_analytic} uses analytical evaluation of $f$ and $F$, and Figure~\ref{fig:gmm_traj_learned} uses the learned surrogate.
The trajectories in Figure~\ref{fig:gmm_traj_learned} closely match those in Figure~\ref{fig:gmm_traj_analytic}, so the learned surrogate preserves the transport pattern obtained with analytical evaluation.
Figure~\ref{fig:gmm_total_loss_time} plots the total loss $\mathcal{L}_{\mathrm{tot}}$ evaluated with analytical $f$ and $F$ against wall-clock time at $N=1000$, and the learned surrogate reaches the same loss level substantially faster.
Figure~\ref{fig:gmm_int_loss} plots the interaction matching loss $\mathcal{L}_{\mathrm{int}}^N$; it decreases rapidly and settles at a small value, showing that $\tilde f_\psi$ tracks $f$ on the sampled trajectories.
Figure~\ref{fig:gmm_time_comparison} shows how the per-iteration time scales with $N$: the analytical cost grows steeply, the learned surrogate grows only mildly, and the gap exceeds an order of magnitude at the largest $N$ tested.
Figure~\ref{fig:gmm_strength_comparison} compares the diagnostic $A_k(t)$ from analytical and surrogate trajectories for several values of $w$.
The learned surrogate reproduces the qualitative change in $A_k(t)$ as $w$ varies.
Table~\ref{tab:gmm_backend_comparison} compares the learned surrogate ($M_{\mathrm{int}}=200$) and the random-batch approximation ($M_{\mathrm{RB}}=128$).
The learned surrogate attains lower time per iteration and lower best analytical loss, while the random-batch approximation gives a slightly smaller pointwise interaction error.

\begin{figure}[ht]
  \centering
  \begin{subfigure}{0.8\linewidth}
    \centering
    \includegraphics[width=\linewidth]{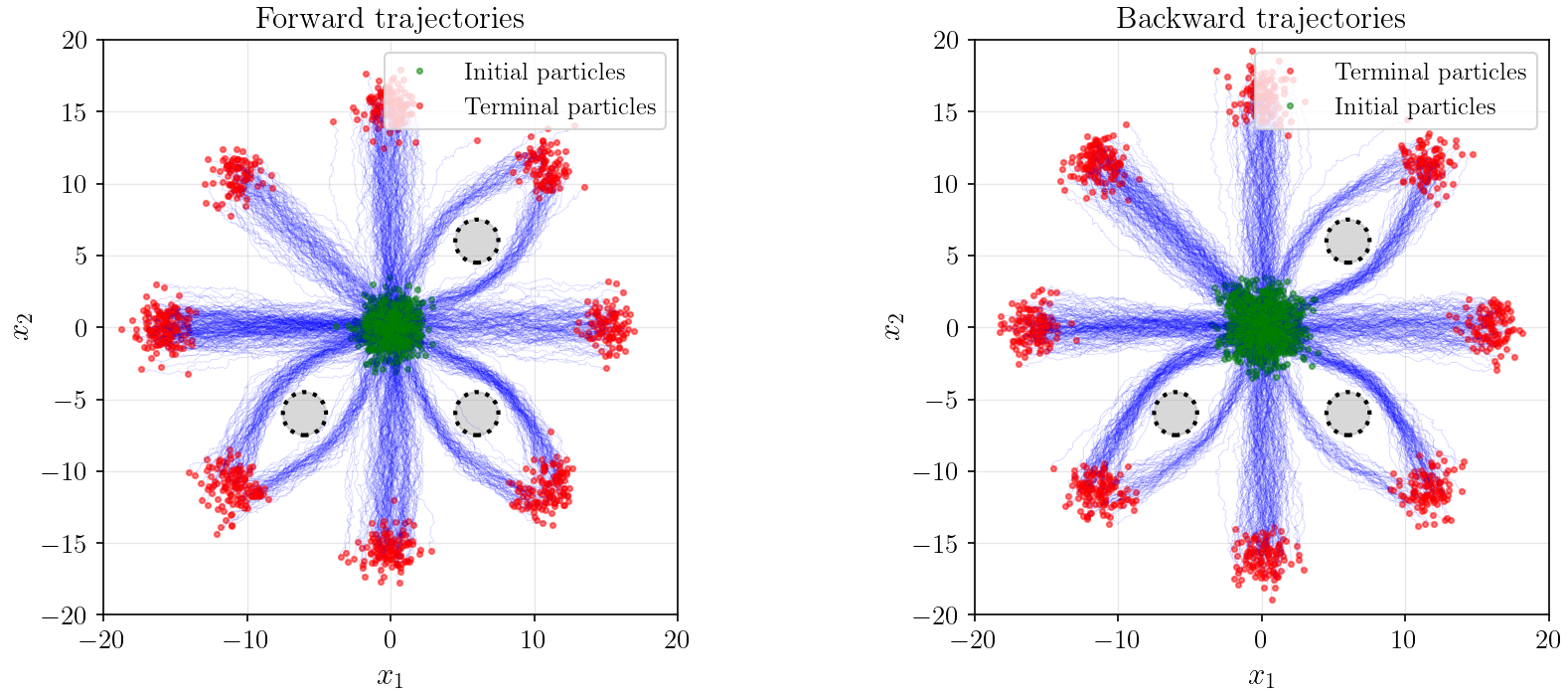}
    \caption{Analytical evaluation}
    \label{fig:gmm_traj_analytic}
  \end{subfigure}
  \hfill
  \begin{subfigure}{0.8\linewidth}
    \centering
    \includegraphics[width=\linewidth]{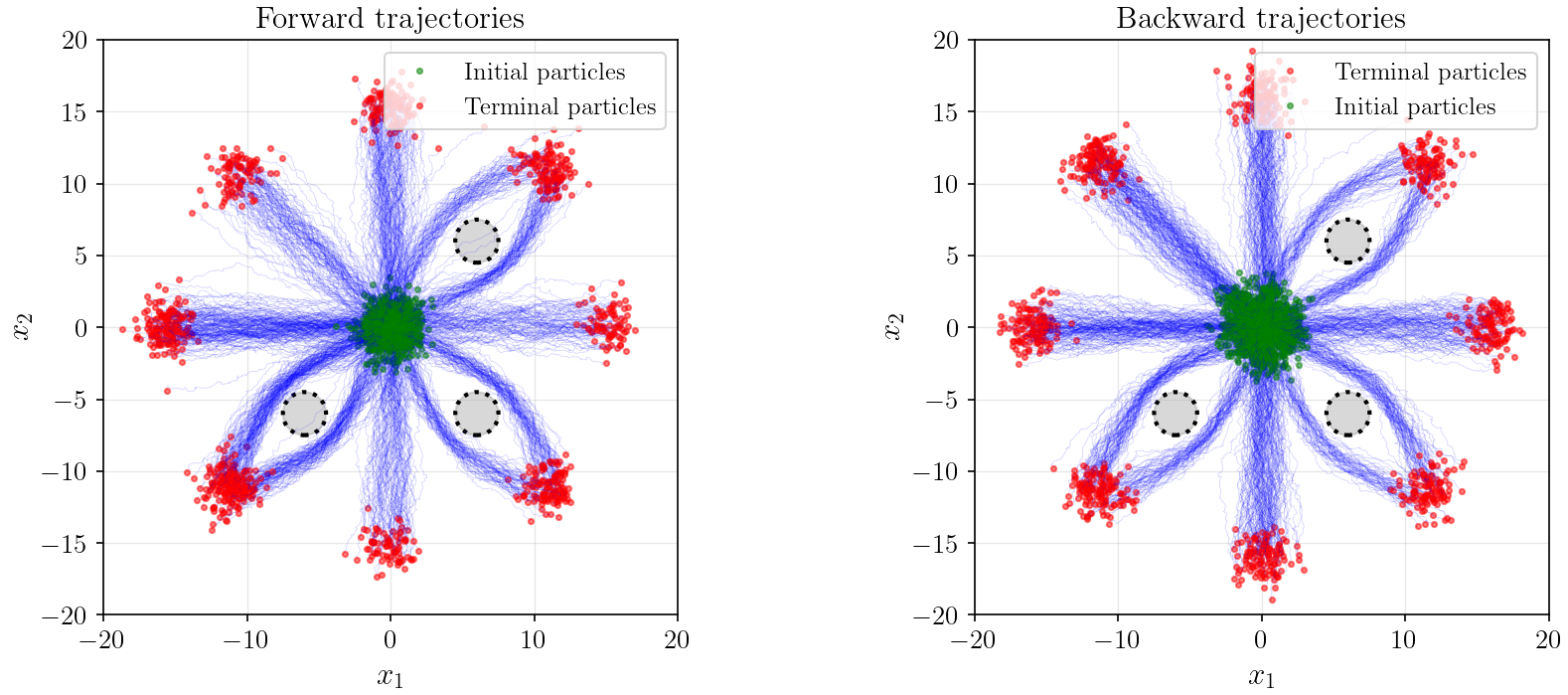}
    \caption{Learned surrogate}
    \label{fig:gmm_traj_learned}
  \end{subfigure}
  \caption{GMM: trajectory comparison (forward left, backward right). The learned surrogate reproduces the trajectories obtained with analytical evaluation.}
  \label{fig:gmm_trajectory_comparison}
\end{figure}

\begin{figure}[ht]
  \centering
  \includegraphics[width=0.6\linewidth]{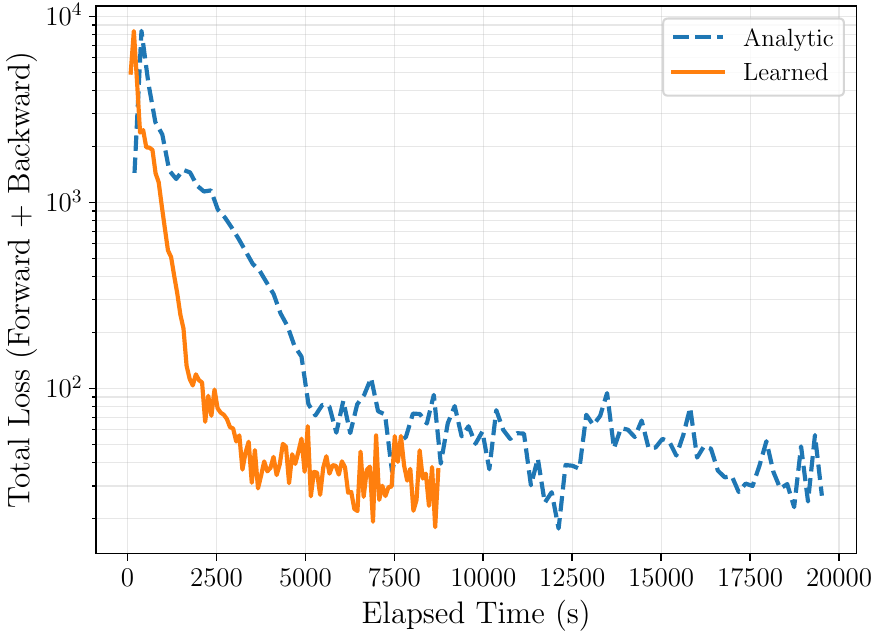}
  \caption{GMM: total loss evaluated with analytical $f$ and $F$ vs wall-clock time.}
  \label{fig:gmm_total_loss_time}
\end{figure}

\begin{figure}[ht]
  \centering
  \includegraphics[width=0.6\linewidth]{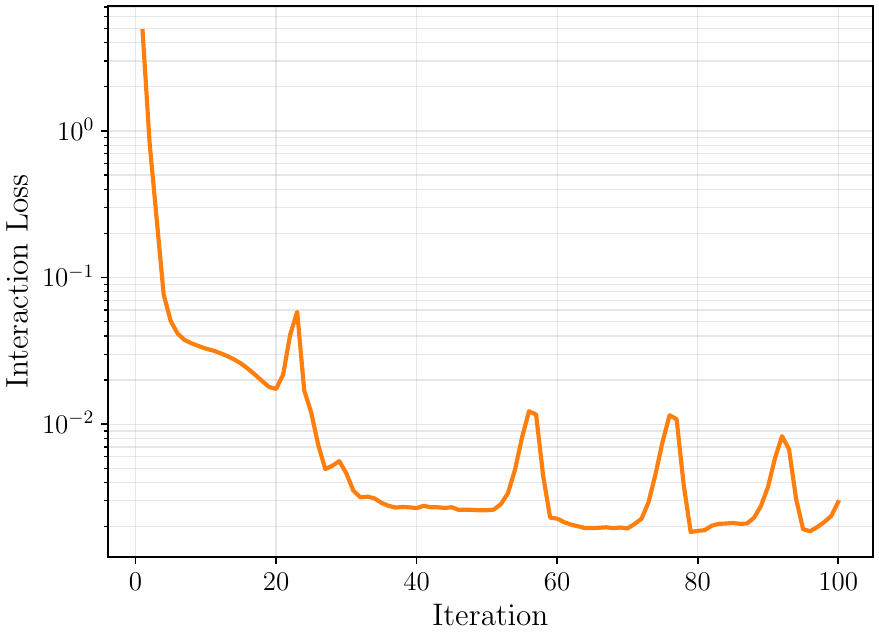}
  \caption{GMM: interaction matching loss $\mathcal{L}_{\mathrm{int}}^N$.}
  \label{fig:gmm_int_loss}
\end{figure}

\begin{figure}[ht]
  \centering
  \includegraphics[width=0.6\linewidth]{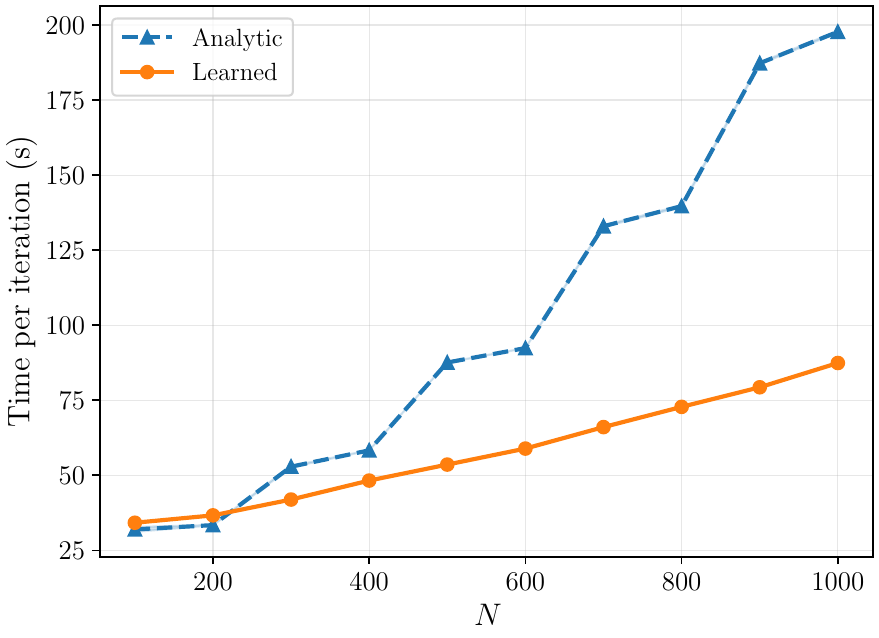}
  \caption{GMM: wall-clock time per outer iteration vs number of agents $N$.}
  \label{fig:gmm_time_comparison}
\end{figure}

\begin{figure}[ht]
  \centering
  \begin{subfigure}{0.49\linewidth}
    \centering
    \includegraphics[width=\linewidth]{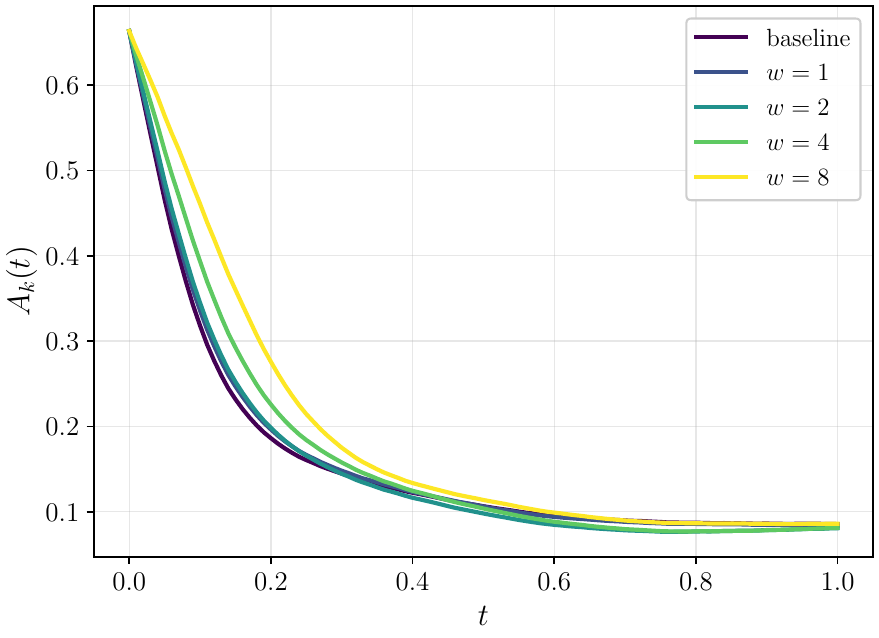}
    \caption{Analytical evaluation.}
  \end{subfigure}
  \hfill
  \begin{subfigure}{0.49\linewidth}
    \centering
    \includegraphics[width=\linewidth]{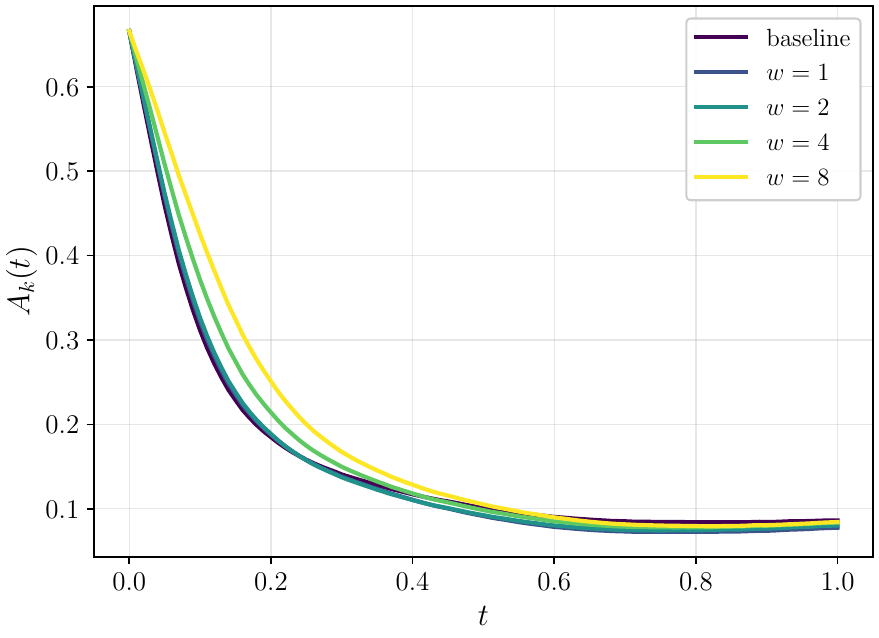}
    \caption{Learned surrogate.}
  \end{subfigure}
  \caption{GMM: proximity diagnostic $A_k(t)$ across values of $w$. Each panel shows the case $w=0$ and several nonzero values of $w$.}
  \label{fig:gmm_strength_comparison}
\end{figure}

\begin{table}[ht]
  \centering
  \begin{tabular}{lccc}
    \hline
    Method                                     & Time/iter. (s)   & Analytical loss & Interaction error \\
    \hline
    Learned surrogate ($M_{\mathrm{int}}=200$) & $93.66 \pm 1.36$ & $3.78 \pm 0.83$ & $0.229 \pm 0.040$ \\
    Random batch ($M_{\mathrm{RB}}=128$)       & $99.10 \pm 0.95$ & $4.48 \pm 1.36$ & $0.208 \pm 0.014$ \\
    \hline
  \end{tabular}
  \caption{GMM backend comparison at $N=1000$, $\Delta t=0.01$. All columns are averaged over five independent runs and reported as mean $\pm$ standard deviation. Time per iteration is the total training time divided by the 100 outer iterations. Analytical loss is the best checkpoint-evaluated value of $\mathcal{L}_{\mathrm{tot}}$, computed with analytical interactions. Interaction error is the mean relative $L^2$ error of the forward and backward drift interaction.}
  \label{tab:gmm_backend_comparison}
\end{table}

\subsection{V-neck Navigation with Directional Congestion}\label{subsec:vneck}
\paragraph{Setup}
We consider transport through a narrow V-shaped bottleneck with a directional congestion cost.
The initial distribution is $\rho_0=\mathcal{N}((-7,0)^\top,0.2I_2)$ and the terminal distribution is $\rho_T=\mathcal{N}((7,0)^\top,0.2I_2)$.
A constant base drift $f_0(x)=(6,0)^\top$ pushes particles toward the target, while the obstacle cost
\begin{equation}
  F_{\mathrm{obs}}(x)
  =
  1500\cdot \mathbf{1}\{5x_1^2-x_2^2<-0.36\}
  \label{eq:vneck_directional_obstacle}
\end{equation}
penalizes particles outside the V-shaped passage.
Pedestrian models often combine a target or desired direction with nonlocal crowd avoidance~\cite{Piccoli2009Pedestrian,Colombo2011Class}; social-force models also include direction-dependent responses to nearby pedestrians~\cite{Helbing1995Social}.
Motivated by these modeling principles, we use the direction to the common target as a simple proxy for the walking direction.
Let $g=(7,0)^\top$ and define
\begin{equation}
  e_g(x)=\frac{g-x}{\norm{g-x}+\varepsilon},
  \qquad
  e_{xy}=\frac{y-x}{\norm{y-x}+\varepsilon}.
\end{equation}
The directional congestion kernel is
\begin{equation}
  k(x,y)
  =
  \frac{1+\ip{e_g(x)}{e_{xy}}}{2}
  \exp\!\left(-\frac{\norm{x-y}}{\sigma_{\mathrm{int}}}\right),
  \label{eq:vneck_directional_kernel}
\end{equation}
and the corresponding nonlocal running cost is
\begin{equation}
  F_{\mathrm{int}}(x,t,\rho_t)
  =
  w\int_{\R^2} k(x,y)\rho(y,t)\,\di y.
  \label{eq:vneck_directional_cost}
\end{equation}
The angular factor assigns greater weight to nearby particles located in front of the query particle, yielding a minimal directional-congestion benchmark rather than a calibrated pedestrian model.
Unlike a translation-invariant convolution kernel, $k(x,y)$ depends on the absolute query position through the goal direction $e_g(x)$ and is generally asymmetric in $(x,y)$.
The total running cost is $F=F_{\mathrm{obs}}+F_{\mathrm{int}}$.
We use $N=1000$, horizon $T=2.0$, time step $\Delta t=0.01$, diffusion coefficient $\sigma=1.0$, $w=1.0$, $\sigma_{\mathrm{int}}=1.0$, and $\varepsilon=10^{-6}$.
The learned surrogate uses three hidden layers of 128 units and $M_{\mathrm{int}}=100$ interaction updates per outer iteration.
Its inputs are divided by $7$ and its scalar cost targets are standardized during interaction matching.
For the interaction-strength sweep, we vary $w$ while keeping the remaining parameters fixed.

\paragraph{Results}
Figure~\ref{fig:vneck_trajectory_comparison} compares trajectories produced by analytical evaluation and by the learned surrogate.
Both methods transport the particles through the narrow passage and produce closely matching flow patterns.
Figure~\ref{fig:vneck_total_loss_time} shows the total loss evaluated with the analytical directional congestion cost against wall-clock time.
The learned surrogate reaches a low analytical loss substantially earlier than direct evaluation.
The interaction matching loss in Figure~\ref{fig:vneck_int_loss} decreases throughout training, confirming that the surrogate fits the directional congestion cost along sampled trajectories.
Figure~\ref{fig:vneck_time_comparison} shows that the runtime gap increases with $N$; at $N=900$, direct evaluation takes approximately $958$ seconds per outer iteration, compared with approximately $171$ seconds for the learned surrogate.
Figure~\ref{fig:vneck_strength_comparison} compares $A_k(t)$ from analytical and surrogate trajectories for several values of $w$.
Increasing the congestion weight reduces directional pairwise affinity at intermediate times, and the learned surrogate reproduces both this dependence and the overall temporal profile.
Table~\ref{tab:vneck_backend_comparison} compares the learned surrogate with the random-batch approximation using the same directional congestion cost.
The learned surrogate slightly reduces the time per iteration, while the analytical losses and interaction errors of the two approximations are comparable.

\begin{figure}[ht]
  \centering
  \begin{subfigure}{0.8\linewidth}
    \centering
    \includegraphics[width=\linewidth]{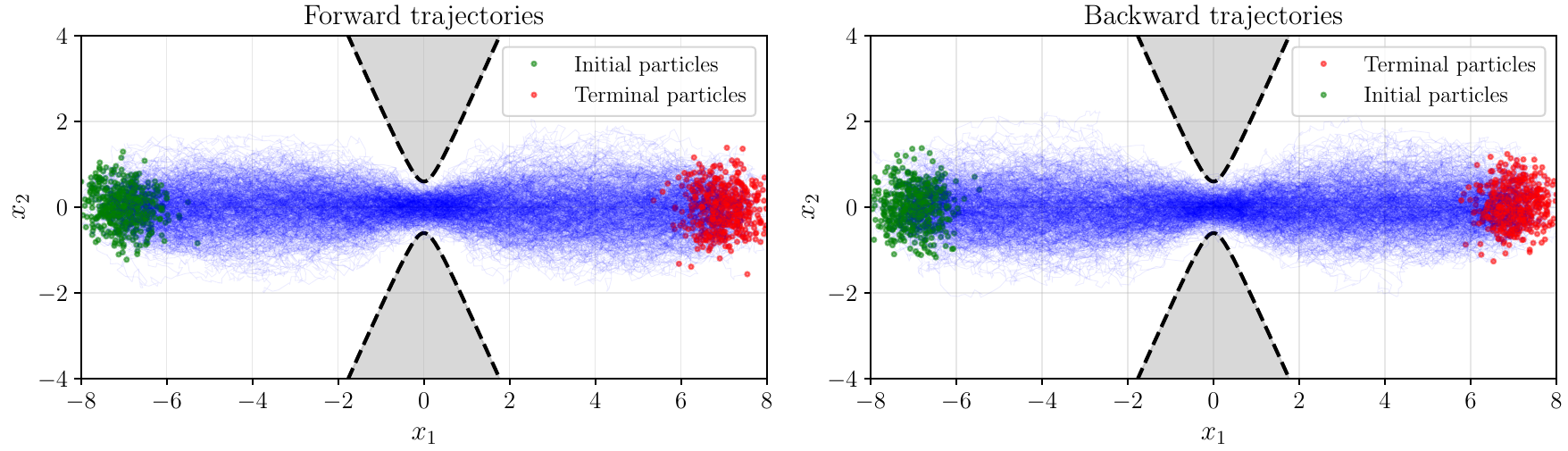}
    \caption{Analytical evaluation}
    \label{fig:vneck_traj_analytic}
  \end{subfigure}
  \hfill
  \begin{subfigure}{0.8\linewidth}
    \centering
    \includegraphics[width=\linewidth]{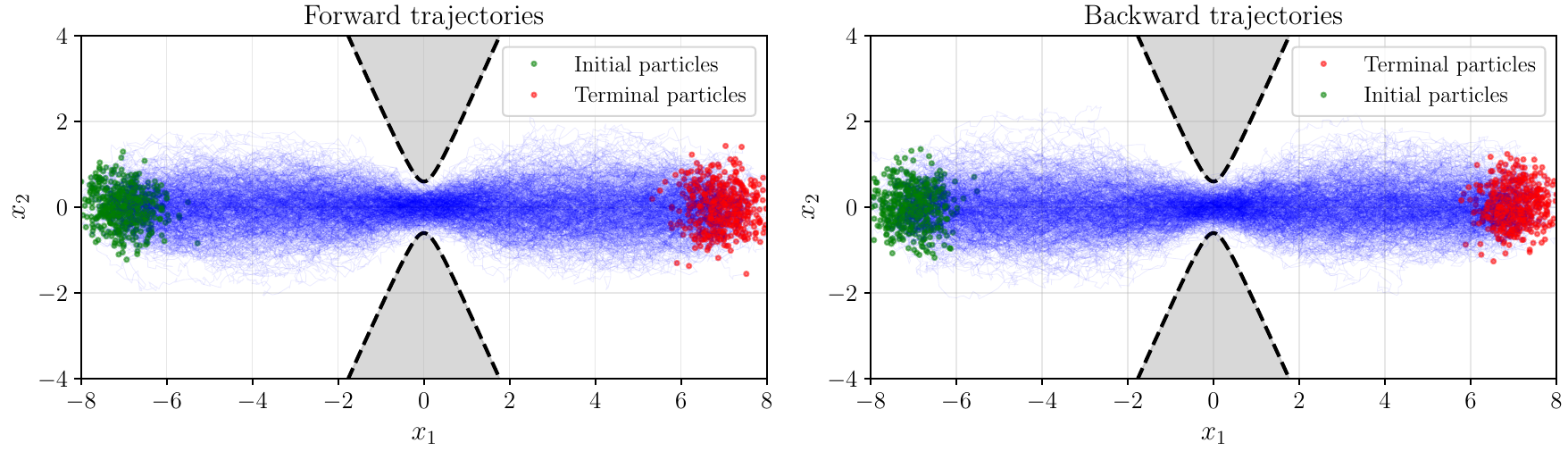}
    \caption{Learned surrogate}
    \label{fig:vneck_traj_learned}
  \end{subfigure}
  \caption{V-neck navigation with directional congestion: trajectory comparison (forward left, backward right).}
  \label{fig:vneck_trajectory_comparison}
\end{figure}

\begin{figure}[ht]
  \centering
  \includegraphics[width=0.6\linewidth]{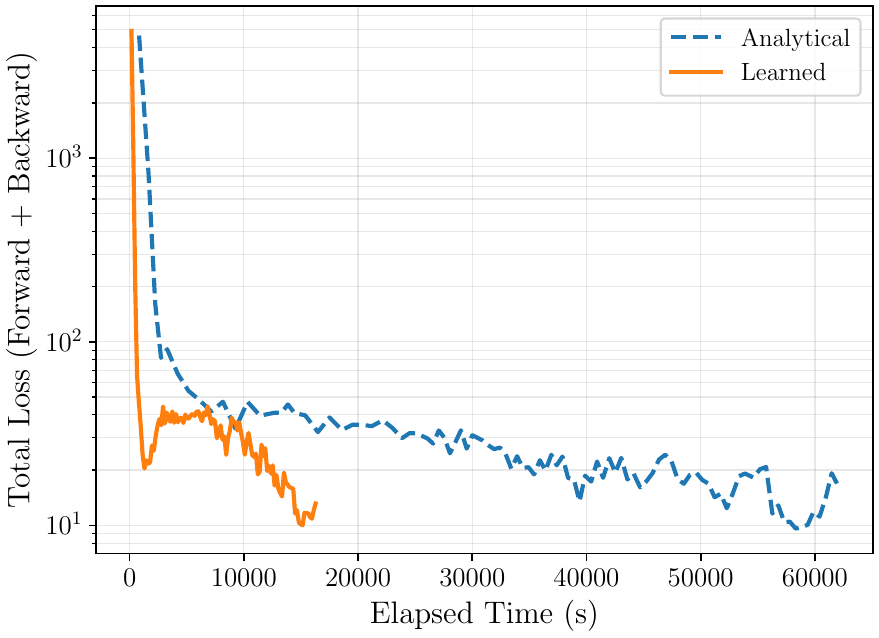}
  \caption{V-neck navigation with directional congestion: total loss evaluated with the analytical interaction vs wall-clock time.}
  \label{fig:vneck_total_loss_time}
\end{figure}

\begin{figure}[ht]
  \centering
  \includegraphics[width=0.6\linewidth]{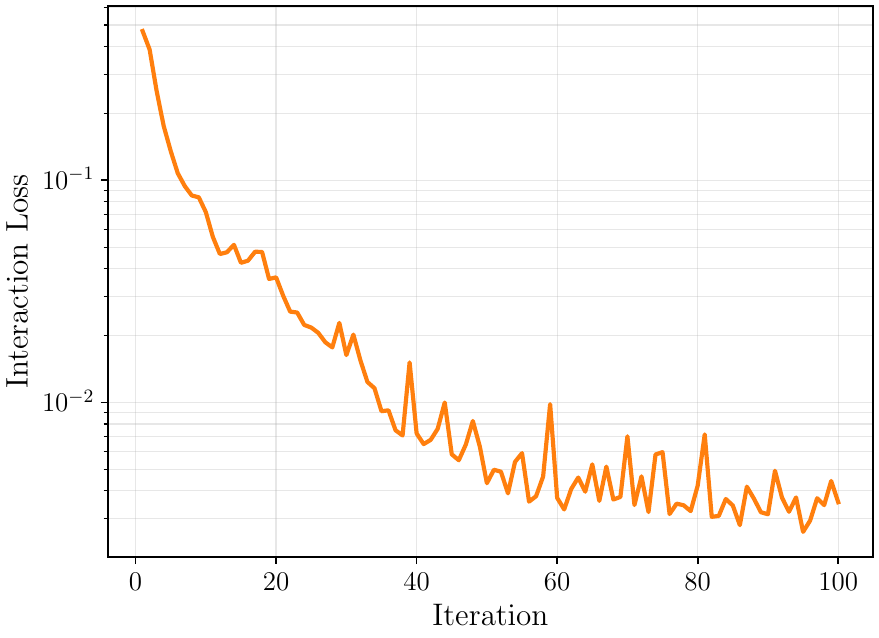}
  \caption{V-neck navigation with directional congestion: interaction matching loss $\mathcal{L}_{\mathrm{int}}^N$.}
  \label{fig:vneck_int_loss}
\end{figure}

\begin{figure}[ht]
  \centering
  \includegraphics[width=0.6\linewidth]{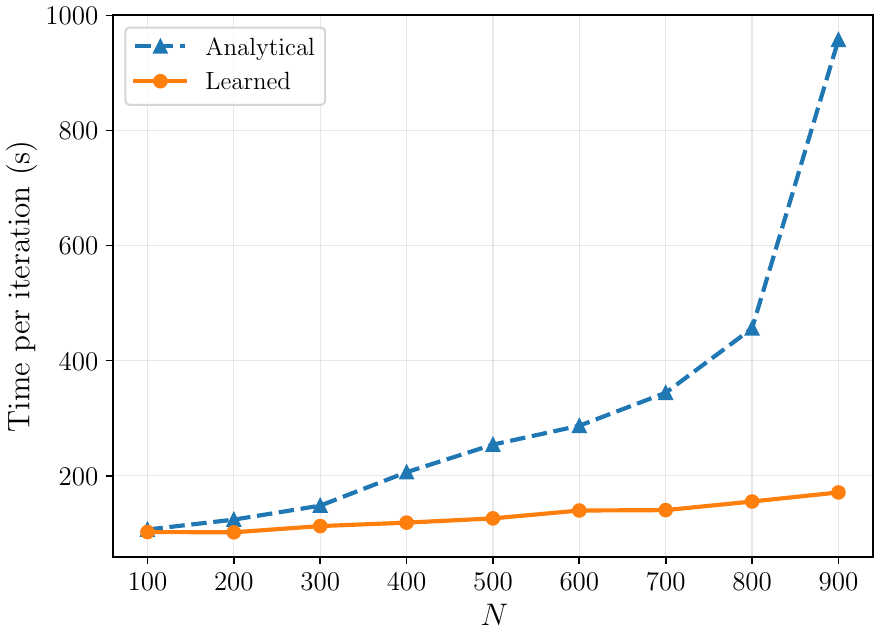}
  \caption{V-neck navigation with directional congestion: wall-clock time per outer iteration vs number of agents $N$.}
  \label{fig:vneck_time_comparison}
\end{figure}

\begin{figure}[ht]
  \centering
  \begin{subfigure}{0.49\linewidth}
    \centering
    \includegraphics[width=\linewidth]{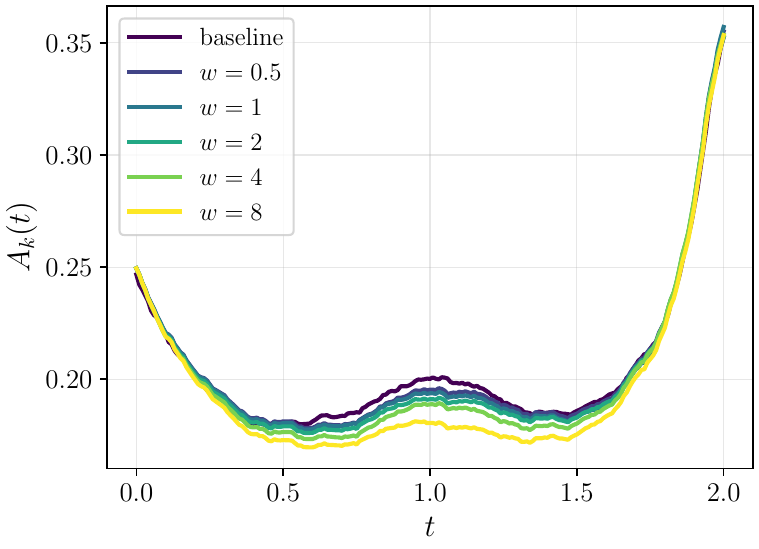}
    \caption{Analytical evaluation.}
  \end{subfigure}
  \hfill
  \begin{subfigure}{0.49\linewidth}
    \centering
    \includegraphics[width=\linewidth]{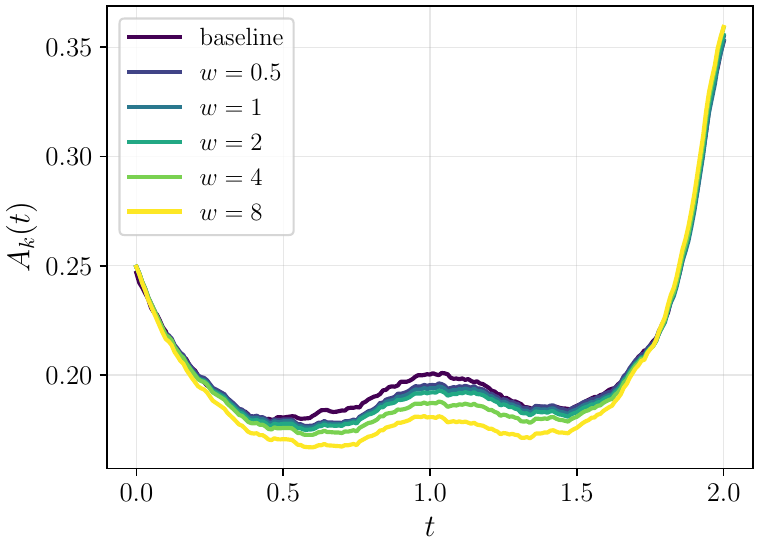}
    \caption{Learned surrogate.}
  \end{subfigure}
  \caption{V-neck navigation with directional congestion: kernel-affinity diagnostic $A_k(t)$ across values of $w$ at $N=1000$. Each panel includes the no-interaction baseline.}
  \label{fig:vneck_strength_comparison}
\end{figure}

\begin{table}[ht]
  \centering
  \begin{tabular}{lccc}
    \hline
    Method                                     & Time/iter. (s)     & Analytical loss  & Interaction error \\
    \hline
    Learned surrogate ($M_{\mathrm{int}}=200$) & $173.62 \pm 16.11$ & $12.45 \pm 5.09$ & $0.062 \pm 0.048$ \\
    Random batch ($M_{\mathrm{RB}}=128$)       & $178.73 \pm 5.16$  & $11.48 \pm 5.26$ & $0.064 \pm 0.001$ \\
    \hline
  \end{tabular}
  \caption{V-neck backend comparison with directional congestion at $N=1000$ and $\Delta t=0.01$. All columns are averaged over eight paired independent runs and reported as mean $\pm$ standard deviation. Time per iteration is the total training time divided by the 100 outer iterations. Analytical loss is the best checkpoint-evaluated value of $\mathcal{L}_{\mathrm{tot}}$, computed with the analytical directional congestion cost. Interaction error is the mean relative $L^2$ error of the forward and backward cost interaction.}
  \label{tab:vneck_backend_comparison}
\end{table}

\subsection{Opinion Dynamics}\label{subsec:opinion_bounded_confidence}
\paragraph{Setup}
Finally, we consider a $d=10$ opinion-dynamics problem in which the interaction enters through the drift.
The initial and terminal opinion distributions are $\rho_0=\mathcal{N}(0,0.25^2 I_d)$ and $\rho_T=\mathcal{N}(0,3.0^2 I_d)$.
The drift follows Hegselmann--Krause-type bounded-confidence dynamics~\cite{Chazelle2017Wellposedness,Bernardo2024Bounded,Garnier2017Consensus}, replacing the discontinuous confidence set~\cite{Ceragioli2011Continuoustime,Ceragioli2012Continuous} with a Gaussian kernel $k(r)=\exp\!\left(-\frac{r^2}{2\sigma_{\mathrm{int}}^2}\right)$.
In the mean-field formulation, the drift interaction is
\begin{equation}
  f(x,t,\rho_t)
  =
  w
  \frac{
    \int_{\R^d} k(\|y-x\|)(y-x)\rho(y,t)\,\di y
  }{
    \int_{\R^d} k(\|y-x\|)\rho(y,t)\,\di y+\varepsilon
  }.
  \label{eq:opinion_smooth_bc}
\end{equation}
Nearby opinions have greater influence on the drift, and the effect decays smoothly with distance.
We use $N=400$, horizon $T=3.0$, time step $\Delta t=0.01$, diffusion coefficient $\sigma=1.0$, $w=1.0$, $\sigma_{\mathrm{int}}=1.5$, and $\varepsilon=10^{-8}$.
Because the drift in \eqref{eq:opinion_smooth_bc} is normalized by the local kernel mass, $A_k(t)$ measures average bounded-confidence connectivity rather than the magnitude of the normalized drift.
Related computational approaches to controlled opinion dynamics include deep kinetic neural networks~\cite{Albi2025Control} and mean-field equilibrium computation for the Hegselmann--Krause model~\cite{Bicego2025Computation}.
Since only $f$ is nonlocal, the algorithm trains a drift surrogate $\tilde f_\psi$ with $M_{\mathrm{int}}=100$ interaction updates.

\paragraph{Results}
Figure~\ref{fig:opinion_trajectory_pca} compares forward-time snapshots after projecting the $10$-dimensional paths onto a common two-dimensional PCA basis.
Without control, the bounded-confidence interaction keeps the population concentrated and does not reach the prescribed high-variance terminal distribution.
The controlled dynamics computed with analytical interactions spreads the population toward the target distribution, and the learned surrogate reproduces this qualitative behavior.
Figure~\ref{fig:opinion_total_loss_time} plots the total loss evaluated with the analytical drift against wall-clock time.
The learned surrogate converges to a lower analytical loss in less wall-clock time than direct analytical evaluation.
Figure~\ref{fig:opinion_int_loss} shows the interaction matching loss for the learned surrogate; the forward and backward drift errors decrease over training, confirming that the surrogate fits the bounded-confidence drift along sampled trajectories.
Figure~\ref{fig:opinion_time_comparison} shows how the per-iteration time scales with the number of particles.
The learned surrogate is substantially faster than direct analytical evaluation across all values of $N$ tested, with the gap already visible at $N=150$.
Figure~\ref{fig:opinion_strength_comparison} shows the kernel-affinity diagnostic $A_k(t)$ at $N=400$ as the interaction weight $w$ varies.
Both analytical evaluation and the learned surrogate show the same qualitative response: larger values of $w$ keep the particles more tightly clustered over a longer portion of the trajectory.
Table~\ref{tab:opinion_backend_comparison} compares the learned surrogate and the random-batch approximation.
The learned surrogate is approximately $2.6$ times faster and achieves substantially lower analytical loss and interaction error than the random-batch approximation.

\begin{figure}[ht]
  \centering
  \includegraphics[width=0.95\linewidth]{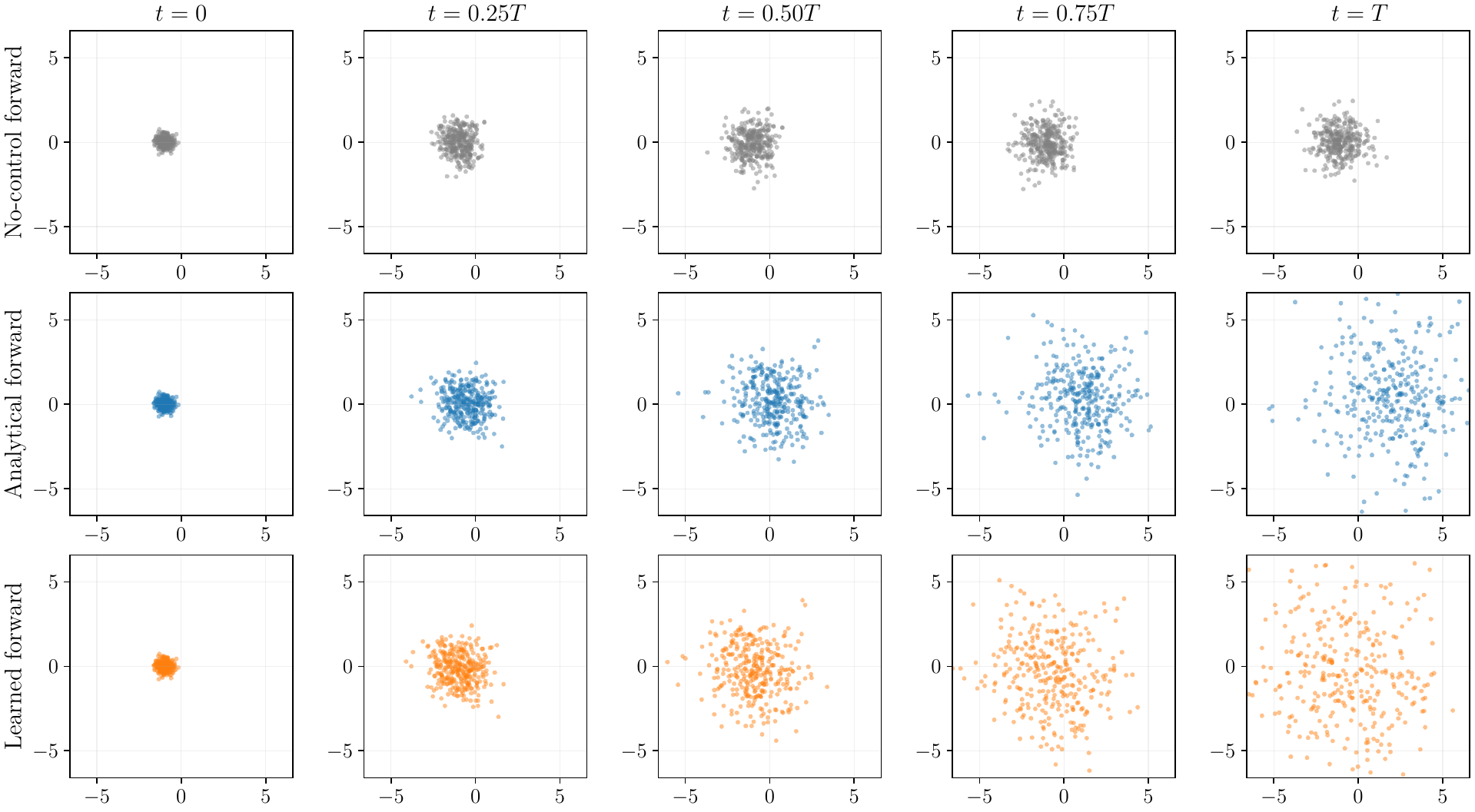}
  \caption{Opinion dynamics: PCA snapshots of uncontrolled and controlled forward trajectories in $d=10$. The uncontrolled process, analytical controlled process, and learned-surrogate controlled process are projected onto the same PCA basis.}
  \label{fig:opinion_trajectory_pca}
\end{figure}

\begin{figure}[ht]
  \centering
  \includegraphics[width=0.6\linewidth]{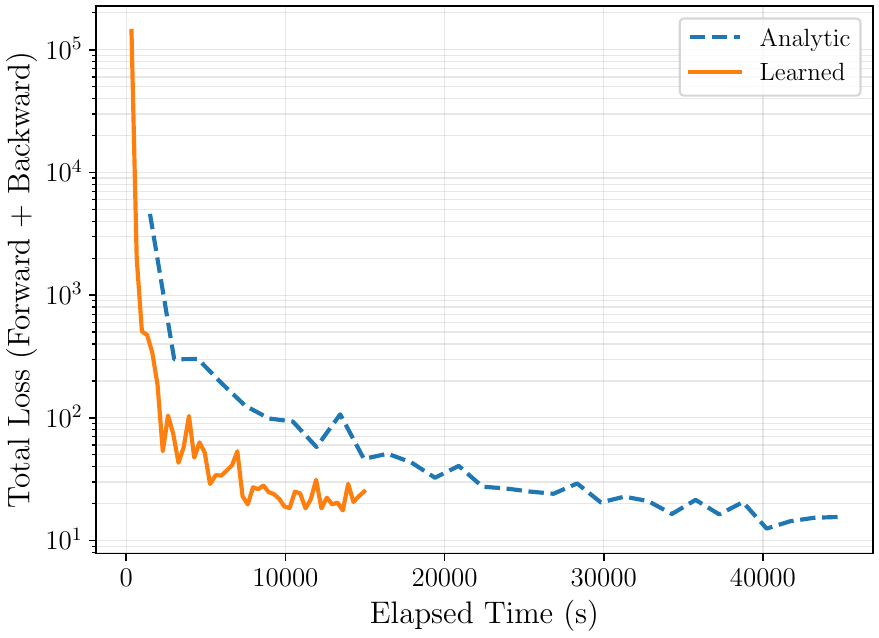}
  \caption{Opinion dynamics: total loss evaluated with analytical drift vs wall-clock time.}
  \label{fig:opinion_total_loss_time}
\end{figure}

\begin{figure}[ht]
  \centering
  \includegraphics[width=0.6\linewidth]{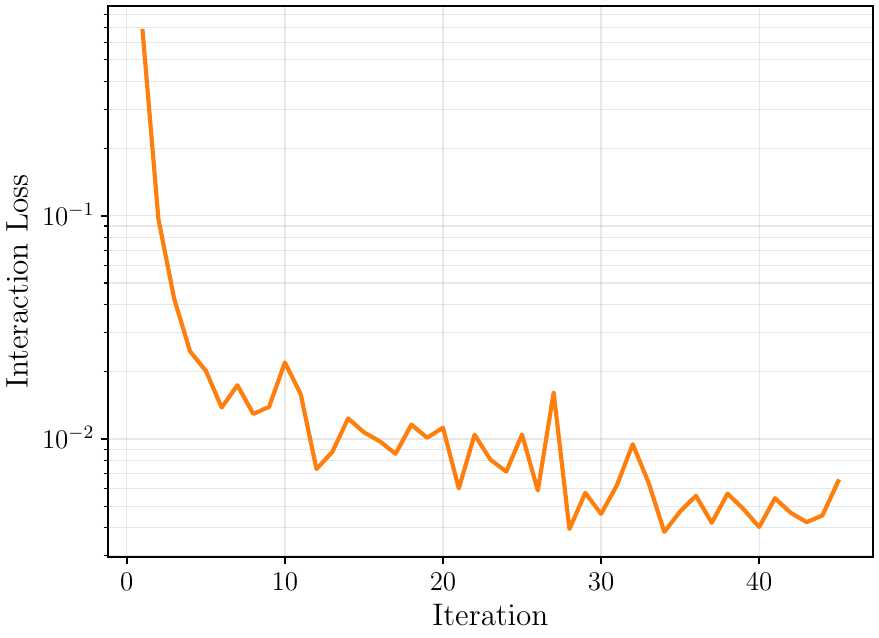}
  \caption{Opinion dynamics: interaction matching loss $\mathcal{L}_{\mathrm{int}}^N$ for the learned drift surrogate.}
  \label{fig:opinion_int_loss}
\end{figure}

\begin{figure}[ht]
  \centering
  \includegraphics[width=0.6\linewidth]{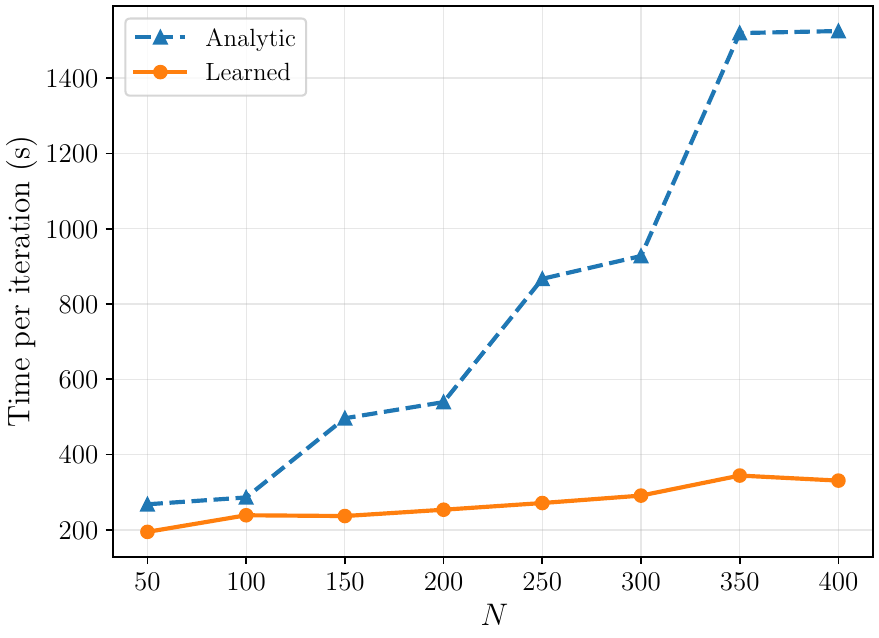}
  \caption{Opinion dynamics: wall-clock time per outer iteration vs number of agents $N$.}
  \label{fig:opinion_time_comparison}
\end{figure}

\begin{figure}[ht]
  \centering
  \begin{subfigure}{0.49\linewidth}
    \centering
    \includegraphics[width=\linewidth]{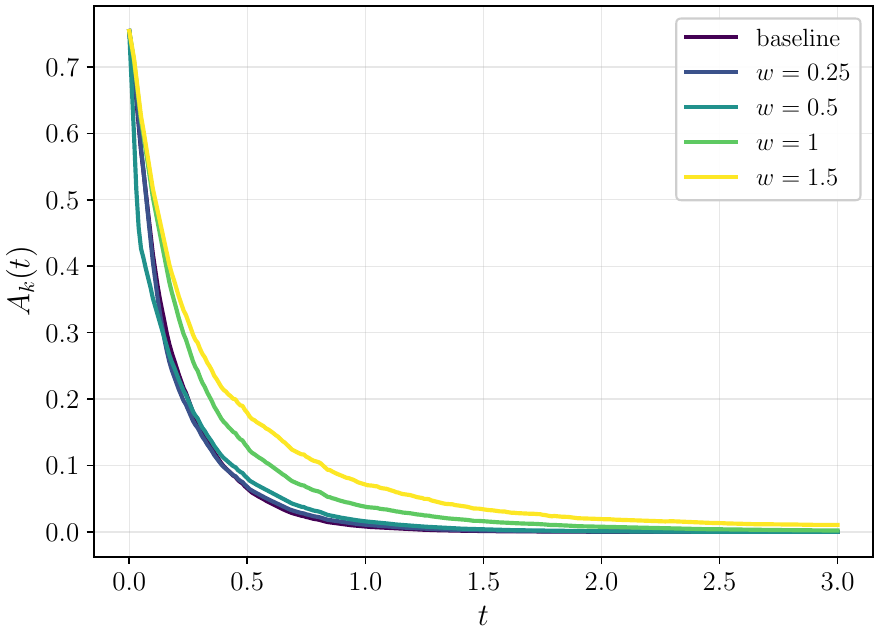}
    \caption{Analytical evaluation.}
  \end{subfigure}
  \hfill
  \begin{subfigure}{0.49\linewidth}
    \centering
    \includegraphics[width=\linewidth]{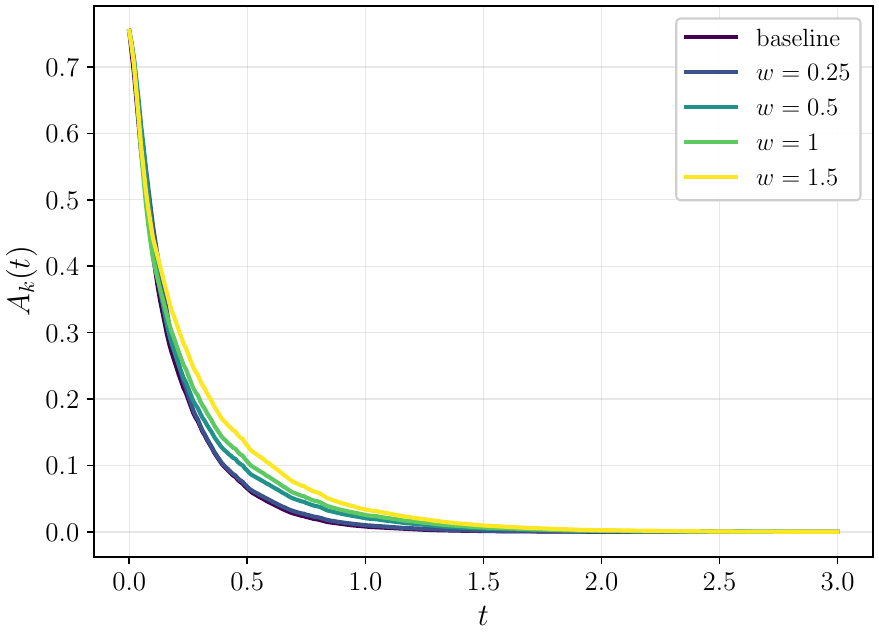}
    \caption{Learned surrogate.}
  \end{subfigure}
  \caption{Opinion dynamics: kernel-affinity diagnostic $A_k(t)$ across values of $w$ at $N=400$. Each panel includes the no-interaction baseline.}
  \label{fig:opinion_strength_comparison}
\end{figure}

\begin{table}[ht]
  \centering
  \begin{tabular}{lccc}
    \hline
    Method                                     & Time/iter. (s)     & Analytical loss     & Interaction error \\
    \hline
    Learned surrogate ($M_{\mathrm{int}}=100$) & $253.10 \pm 57.53$ & $29.80 \pm 20.14$   & $0.228 \pm 0.018$ \\
    Random batch ($M_{\mathrm{RB}}=80$)        & $567.39 \pm 15.83$ & $497.18 \pm 716.37$ & $2.298 \pm 2.177$ \\
    \hline
  \end{tabular}
  \caption{Opinion dynamics backend comparison at $N=400$, $d=10$, and $\Delta t=0.01$. All columns are averaged over six paired independent runs and reported as mean $\pm$ standard deviation. Time per iteration is the total training time divided by the 30 outer iterations. Analytical loss is the best checkpoint-evaluated value of $\mathcal{L}_{\mathrm{tot}}$, computed with the analytical drift. Interaction error is the mean relative $L^2$ error of the forward and backward drift interaction.}
  \label{tab:opinion_backend_comparison}
\end{table}

\section{Conclusion}\label{sec:conclusion}

We addressed the computational bottleneck of solving MFSB problems with nonlocal interactions by replacing the expensive distributional terms with neural surrogates in state and time. The four-stage alternating scheme integrates surrogates of the drift~$f$ and running cost~$F$ into the FBSDE framework, reducing the per-step cost from $O(N^2)$ to $O(N)$ while preserving forward--backward consistency and the prescribed endpoint marginals. The accompanying stability analysis turns this empirical acceleration into a controlled approximation: Gr\"onwall-type bounds show, under a small-gain condition, that the trajectory error is governed by the surrogate error alone.

Across crowd-navigation and high-dimensional opinion-dynamics benchmarks, the surrogates reproduce the trajectories obtained with exact evaluation at lower training cost. The sharpest gains appear where they matter most: when the interaction is a nonlinear functional of the measure, as in the normalized bounded-confidence drift, random-batch subsampling becomes biased and unstable, whereas the learned surrogate stays accurate and several times faster. There is a class of problems in which learned surrogates are not merely a convenience but the more reliable option, and it points to a broader principle for accelerating mean-field solvers: the harder the interaction is to estimate by subsampling, the more there is to gain from learning it.

Several directions remain open. Establishing convergence guarantees for the
alternating training scheme is the most immediate, and quantifying the
finite-$N$ error from replacing the marginal laws by empirical measures would close the gap between the theory and the particle-based algorithm actually run. Beyond this, surrogates that depend explicitly on the distribution, rather than absorbing it through the time argument, may extend the approach to interactions that vary sharply with the
population state.

\appendix
\section{Proofs}\label{app:proofs}

\begin{lemma}[Forward differential inequalities]\label{thm:diff_ineq_XY}
  Assume that Assumptions~\ref{ass:global-consistency}, \ref{ass:global-consistency-learned}, \ref{ass:regularity-0}, and \ref{ass:grad-bound-global} hold.
  Then, for all $t\in[0,T]$, the error processes
  $\Delta X_t = X_t-\tilde X_t$, $\Delta Y_t = Y_t-\tilde Y_t$ satisfy
  \begin{subequations}\label{eq:split-JP-diff-ineq}
    \begin{align}
      \frac{\mathrm d}{\di t}\,\E\norm{\Delta X_t}^2
       & \le
      C_{XX}\,\E\norm{\Delta X_t}^2
      + C_{XZ}\,\E\norm{\Delta Z_t}^2
      + C_{X,\varepsilon_f}\,\varepsilon_f^2,
      \label{eq:split-JP-DeltaX}
      \\
      \frac{\mathrm d}{\di t}\,\E|\Delta Y_t|^2
       & \le
      C_{YY}\,\E|\Delta Y_t|^2
      + C_{YX}\,\E\norm{\Delta X_t}^2
      + C_{YZ}\,\E\norm{\Delta Z_t}^2
      + C_{Y,\varepsilon_F}\,\varepsilon_F^2.
      \label{eq:split-JP-DeltaY}
    \end{align}
  \end{subequations}
\end{lemma}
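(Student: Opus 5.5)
The plan is to apply It\^o's formula to $\|\Delta X_t\|^2$ and $|\Delta Y_t|^2$, using the coupling from Assumption~\ref{ass:global-consistency-learned}: the same Brownian motion and $\tilde X_0=X_0$. I then bound each cross term by Young's inequality with the parameters $\alpha$ (for $X$) and $\beta$ (for $Y$), chosen so that the constants come out exactly as $C_{XX},C_{XZ},\dots$ in \eqref{eq:coeff_a_b_def}. Throughout, $Z_t=\sigma\nabla_x Y(X_t,t)$ and $\tilde Z_t=\sigma\nabla_x\tilde Y(\tilde X_t,t)$. By Assumption~\ref{ass:grad-bound-global} both are bounded by $\sigma C_Z$, which makes every stochastic integral below a true martingale with zero expectation.

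\emph{Step 1 ($\Delta X$).} Subtracting the two copies of \eqref{eq:fbsde-forward-full-x} cancels the noise, since the diffusion is additive with the same $\sigma$ and $W$. This leaves the random ODE $\mathrm d\Delta X_t=\bigl(f(X_t,t,\rho_t)-\tilde f(\tilde X_t,t)+\sigma\Delta Z_t\bigr)\mathrm dt$, so $\frac{\mathrm d}{\mathrm dt}\E\|\Delta X_t\|^2=2\E\langle\Delta X_t,\,\cdot\,\rangle$. I split the drift difference as $[f(X_t,t,\rho_t)-f(\tilde X_t,t,\rho_t)]+[f(\tilde X_t,t,\rho_t)-\tilde f(\tilde X_t,t)]$.
\begin{itemize}
\item The first bracket has norm at most $L_f\|\Delta X_t\|$ by Assumption~\ref{ass:regularity-0}, so $2L_f\|\Delta X\|^2\le (L_f^2/\alpha+\alpha)\|\Delta X\|^2$.
\item The second bracket has norm at most $\varepsilon_f$ by \eqref{eq:coeferr_JP}, giving $\alpha\|\Delta X\|^2+\varepsilon_f^2/\alpha$.
\item The control term gives $2\sigma\langle\Delta X,\Delta Z\rangle\le\alpha\|\Delta X\|^2+(\sigma^2/\alpha)\|\Delta Z\|^2$.
\end{itemize}
Summing yields $C_{XX}=L_f^2/\alpha+3\alpha$, $C_{XZ}=\sigma^2/\alpha$ and $C_{X,\varepsilon_f}=1/\alpha$, which is \eqref{eq:split-JP-DeltaX}.

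\emph{Step 2 ($\Delta Y$).} From \eqref{eq:fbsde-forward-full-y} we have $\mathrm d\Delta Y_t=\bigl(\tfrac12(\|Z_t\|^2-\|\tilde Z_t\|^2)+F(X_t,t,\rho_t)-\tilde F(\tilde X_t,t)\bigr)\mathrm dt+\Delta Z_t^\top\mathrm dW_t$. It\^o's formula then gives $\frac{\mathrm d}{\mathrm dt}\E|\Delta Y_t|^2=\E[2\Delta Y_t(\cdots)]+\E\|\Delta Z_t\|^2$, and the quadratic-variation term is the source of the "$1$" in $C_{YZ}$.
\begin{itemize}
\item Write $\tfrac12(\|Z\|^2-\|\tilde Z\|^2)=\tfrac12\langle Z+\tilde Z,\Delta Z\rangle$ with $\|Z+\tilde Z\|\le 2\sigma C_Z$. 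This gives $2|\Delta Y|\,\sigma C_Z\|\Delta Z\|\le \tfrac{\beta}{4}|\Delta Y|^2+\tfrac{4\sigma^2C_Z^2}{\beta}\|\Delta Z\|^2$.
\item Split the $F$ difference as in Step 1. The Lipschitz part gives $\beta|\Delta Y|^2+(L_F^2/\beta)\|\Delta X\|^2$.
\item The surrogate part gives $\beta|\Delta Y|^2+\varepsilon_F^2/\beta$.
\end{itemize}
The $|\Delta Y|^2$ coefficients add to $\tfrac14\beta+\beta+\beta=\tfrac94\beta=C_{YY}$. The remaining constants match $C_{YX}$, $C_{YZ}$ and $C_{Y,\varepsilon_F}$, giving \eqref{eq:split-JP-DeltaY}.

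\emph{Main obstacle.} The delicate point is justifying the passage from the It\^o differential to a differential inequality for $t\mapsto\E|\cdot|^2$. This needs three things:
\begin{itemize}
\item square-integrability of $\Delta X,\Delta Y$, which follows from the strong-solution hypotheses together with linear growth from the Lipschitz bounds;
\item boundedness of $Z,\tilde Z$, so that $\E\int\Delta Y\,\Delta Z^\top\mathrm dW=0$;
\item absolute continuity of the expectations, so that the inequality holds for a.e.\ $t$, or in integrated form, which is all the later Gr\"onwall step uses.
\end{itemize}
A secondary subtlety is that $f$ is evaluated at the true marginal $\rho_t$ while $\tilde f$ has no measure argument. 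The splitting handles this, because $\varepsilon_f$ and $\varepsilon_F$ are defined relative to the true $\rho_t$, and we only need Lipschitz continuity of $f(\cdot,t,\rho_t)$ at that fixed $\rho_t$.
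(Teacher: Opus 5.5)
Your proposal is correct and follows the paper's proof essentially line by line. It uses the same splitting of each drift difference into a Lipschitz part and a surrogate residual, and the same Young parameters: $\alpha$ three times for $\Delta X$, and $\beta/4,\beta,\beta$ for $\Delta Y$, with the quadratic variation supplying the $1$ in $C_{YZ}$. This yields exactly the constants in \eqref{eq:coeff_a_b_def}, and your remarks on the martingale property and on reading the inequality for a.e.\ $t$ (or in integrated form) supply justifications the paper leaves implicit, which is all the later Gr\"onwall step needs.
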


\begin{proof}[Proof of Lemma~\ref{thm:diff_ineq_XY}]
  First, we focus on the forward difference $\Delta X_t\coloneqq X_t-\tilde X_t$.
  Let
  \begin{align}
    \delta f_t & \coloneqq f(X_t,t,\rho_t) - f(\tilde X_t,t,\rho_t),        \\
    r^f_t      & \coloneqq f(\tilde X_t,t,\rho_t) - \tilde f(\tilde X_t,t),
  \end{align}
  then, we have
  \begin{equation}\label{eq:DeltaX-diff}
    \di\Delta X_t = (\delta f_t + r^f_t)\dt + \sigma\,\Delta Z_t\dt.
  \end{equation}
  Applying It\^{o}'s formula to $\norm{\Delta X_t}^2$ yields
  \begin{equation}\label{eq:dX-squared}
    \di\norm{\Delta X_t}^2 = 2\ip{\Delta X_t}{\delta f_t}\dt + 2\ip{\Delta X_t}{r^f_t}\dt + 2\ip{\Delta X_t}{\sigma\Delta Z_t}\dt.
  \end{equation}
  By Assumption~\ref{ass:regularity-0},
  \begin{align}
    \norm{\delta f_t} \le L_f\norm{\Delta X_t}.
  \end{align}
  Using Young's inequality $2ab\le \alpha a^2 + b^2/\alpha$ (for $\alpha>0$), we obtain
  \begin{align}
    2\ip{\Delta X_t}{\delta f_t}
    \le \left(\frac{(L_f)^2}{\alpha}+\alpha\right)\norm{\Delta X_t}^2.
  \end{align}
  Also, by definition, $r^f_t$ satisfies $\norm{r^f_t}\le \varepsilon_f$, so
  \begin{align}
    2\ip{\Delta X_t}{r^f_t} \le \alpha\norm{\Delta X_t}^2 + \frac{\varepsilon_f^2}{\alpha}.
  \end{align}
  The cross term $\ip{\Delta X_t}{\sigma\Delta Z_t}$ can be separated as
  \begin{align}
    2\ip{\Delta X_t}{\sigma\Delta Z_t} \le \alpha\norm{\Delta X_t}^2 + \frac{\sigma^2}{\alpha}\norm{\Delta Z_t}^2.
  \end{align}
  Substituting these into \eqref{eq:dX-squared} and taking expectations yields \eqref{eq:split-JP-DeltaX}.

  Next, for the backward difference $\Delta Y_t\coloneqq Y_t-\tilde Y_t$, let
  \begin{align}
    \delta F_t & \coloneqq F(X_t,t,\rho_t) - F(\tilde X_t,t,\rho_t),        \\
    r^F_t      & \coloneqq F(\tilde X_t,t,\rho_t) - \tilde F(\tilde X_t,t),
  \end{align}
  then we can express
  \begin{equation}
    \di\Delta Y_t = \left(\tfrac12(\norm{Z_t}^2-\norm{\tilde Z_t}^2) + \delta F_t + r^F_t\right)\dt + \Delta Z_t^\top \di W_t.
  \end{equation}
  Applying It\^{o}'s formula to $|\Delta Y_t|^2$ gives
  \begin{align}\label{eq:DeltaY-diff}
    \di |\Delta Y_t|^2 = 2\Delta Y_t\left(\tfrac12(\norm{Z_t}^2-\norm{\tilde Z_t}^2)+\delta F_t+r^F_t\right)\dt + \norm{\Delta Z_t}^2\dt + 2\Delta Y_t \Delta Z_t^\top \di W_t.
  \end{align}
  The last martingale term vanishes when taking expectations. For the first term, using $\norm{Z_t}^2-\norm{\tilde Z_t}^2 = \Delta Z_t^\top(Z_t+\tilde Z_t)$
  and Young's inequality $|ab|\le \beta a^2/4 + \beta^{-1} b^2$ with $a=\Delta Y_t$ and $b=\Delta Z_t^\top(Z_t+\tilde Z_t)$, we obtain
  \begin{align}
    \left|\Delta Y_t \Delta Z_t^\top (Z_t+\tilde Z_t)\right|
    \le \frac{\beta}{4}|\Delta Y_t|^2 + \frac{1}{\beta}\|\Delta Z_t\|^2\|Z_t+\tilde Z_t\|^2.
  \end{align}
  From Assumption~\ref{ass:grad-bound-global}, we have $\|Z_t\|\le \sigma C_Z$ and $\|\tilde Z_t\|\le \sigma C_Z$, so
  \(\|Z_t+\tilde Z_t\|^2\le (\|Z_t\|+\|\tilde Z_t\|)^2\le 4\sigma^2 C_Z^2\) a.s.; taking expectations yields
  \begin{align}
    \E\big[\Delta Y_t \Delta Z_t^\top (Z_t+\tilde Z_t)\big]
    \le \frac{\beta}{4}\E|\Delta Y_t|^2 + \frac{4\sigma^2 C_Z^2}{\beta}\E\norm{\Delta Z_t}^2.
  \end{align}
  For $\delta F_t$, by Assumption \ref{ass:regularity-0},
  \begin{align}
    \E|\delta F_t|^2 \le (L_F)^2\E\norm{\Delta X_t}^2.
  \end{align}
  By Young's inequality,
  \begin{align}
    2\E[\Delta Y_t \delta F_t]
    \le \beta\E|\Delta Y_t|^2 + \frac{1}{\beta}\E|\delta F_t|^2
    \le \beta\E|\Delta Y_t|^2 + \frac{(L_F)^2}{\beta}\E\norm{\Delta X_t}^2.
  \end{align}
  Similarly, for $r^F_t$,
  \begin{align}
    2\E[\Delta Y_t r^F_t] \le \beta\E|\Delta Y_t|^2 + \frac{1}{\beta}\varepsilon_F^2.
  \end{align}
  Combining these with \eqref{eq:DeltaY-diff} yields \eqref{eq:split-JP-DeltaY}.
\end{proof}

\begin{lemma}[Forward $Z$-energy estimate]\label{thm:Z_energy_estimate}
  Assume that Assumptions~\ref{ass:global-consistency}, \ref{ass:global-consistency-learned}, \ref{ass:regularity-0}, and \ref{ass:grad-bound-global} hold.
  Then, the error $\int_0^T \|\Delta Z_t\|^2 \di t$ satisfies
  \begin{align}
    \begin{aligned}
       & \int_0^T \E \|\Delta Z_t\|^2\,\di t \\
       & \le
      \frac{1}{1-\eta/2}
      \Bigg[
        \E|\Delta Y_T|^2 + \E|\Delta Y_0|^2
        + C_{ZX}\, \E\int_0^T \|\Delta X_t\|^2\,\di t
        + C_{ZY}\, \E\int_0^T |\Delta Y_t|^2\,\di t
        + C_{Z,\varepsilon_F}\, T \varepsilon_F^2
        \Bigg].
    \end{aligned}\label{eq:Z-est-general}
  \end{align}
\end{lemma}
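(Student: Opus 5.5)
Apply It\^o's formula to $|\Delta Y_t|^2$ exactly as in the proof of Lemma~\ref{thm:diff_ineq_XY}, but integrate over $[0,T]$ and isolate the quadratic-variation term instead of differentiating. From \eqref{eq:DeltaY-diff}, integrating and taking expectations (the stochastic integral $\int 2\Delta Y_t\Delta Z_t^\top\di W_t$ is a true martingale under the uniform bound on $Z,\tilde Z$ from Assumption~\ref{ass:grad-bound-global} and square-integrability of $\Delta Y$) gives
\begin{align}
  \int_0^T\E\|\Delta Z_t\|^2\,\di t
  = \E|\Delta Y_T|^2-\E|\Delta Y_0|^2
  -\int_0^T 2\,\E\Big[\Delta Y_t\Big(\tfrac12\Delta Z_t^\top(Z_t+\tilde Z_t)+\delta F_t+r^F_t\Big)\Big]\di t .
\end{align}
Bounding $-\E|\Delta Y_0|^2\le \E|\Delta Y_0|^2$ (crude, but it matches the stated form) reduces the task to estimating the three cross terms so that the $\Delta Z$ contribution can be absorbed into the left-hand side.

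For the $Z$-cross term I will use Young's inequality with parameter $\eta$: since $\|Z_t+\tilde Z_t\|\le 2\sigma C_Z$ a.s.,
\[
  \big|\Delta Y_t\,\Delta Z_t^\top(Z_t+\tilde Z_t)\big|
  \le 2\sigma C_Z|\Delta Y_t|\,\|\Delta Z_t\|
  \le \frac{\eta}{2}\|\Delta Z_t\|^2+\frac{2\sigma^2C_Z^2}{\eta}|\Delta Y_t|^2 .
\]
This yields exactly the absorbable piece $\frac{\eta}{2}\int\E\|\Delta Z\|^2$ and the coefficient $2\sigma^2C_Z^2/\eta$ appearing in $C_{ZY}$. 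For the $F$ terms, use $|\delta F_t|\le L_F\|\Delta X_t\|$ and $|r^F_t|\le\varepsilon_F$, and Young with parameter $\gamma$:
\[
  2|\Delta Y_t\,\delta F_t|\le \tfrac{\gamma}{2}|\Delta Y_t|^2+\tfrac{2L_F^2}{\gamma}\|\Delta X_t\|^2,\qquad
  2|\Delta Y_t\,r^F_t|\le \tfrac{\gamma}{2}|\Delta Y_t|^2+\tfrac{2}{\gamma}\varepsilon_F^2 ,
\]
producing $C_{ZX}=2L_F^2/\gamma$, the extra $\gamma$ in $C_{ZY}$, and $C_{Z,\varepsilon_F}T\varepsilon_F^2$ after integrating in time.

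Collecting terms gives
\[
  \Big(1-\frac{\eta}{2}\Big)\int_0^T\E\|\Delta Z_t\|^2\di t
  \le \E|\Delta Y_T|^2+\E|\Delta Y_0|^2+C_{ZX}\int_0^T\E\|\Delta X_t\|^2\di t+C_{ZY}\int_0^T\E|\Delta Y_t|^2\di t+C_{Z,\varepsilon_F}T\varepsilon_F^2,
\]
and since $\eta\in(0,2)$ we divide by $1-\eta/2>0$ to obtain \eqref{eq:Z-est-general}. The only delicate point is justifying that the stochastic integral has zero mean (and that $\int\E\|\Delta Z\|^2<\infty$ so absorption is legitimate); I would handle this by the bounded-gradient assumption, which makes $Z,\tilde Z$ bounded, together with a standard localization by stopping times and Fatou's lemma if square-integrability of $\Delta Y$ is not taken for granted. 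The rest is bookkeeping of Young's constants.
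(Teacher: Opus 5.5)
Your proposal is correct and matches the paper's proof step for step. Both integrate the It\^o identity for $|\Delta Y_t|^2$ over $[0,T]$, drop the martingale term, and bound $-\E|\Delta Y_0|^2$ by $+\E|\Delta Y_0|^2$. Both then apply Young's inequality with parameter $\eta$ to the $Z$-cross term and with $\gamma$ to the $\delta F_t$, $r^F_t$ terms, absorb the $\frac{\eta}{2}\int_0^T\E\|\Delta Z_t\|^2\,\di t$ piece, and divide by $1-\eta/2$.

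Your remark on the zero-mean stochastic integral makes explicit a point the paper only asserts. Localization is not actually needed for it. The left-hand side is finite because $Z,\tilde Z$ are bounded, and one may assume the right-hand side is finite, since otherwise the estimate is trivial. With $\E\int_0^T|\Delta Y_t|^2\,\di t<\infty$, the integrand $2\Delta Y_t\Delta Z_t$ is square-integrable, so the stochastic integral is a true martingale.
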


\begin{proof}[Proof of Lemma~\ref{thm:Z_energy_estimate}]
  Integrating \eqref{eq:DeltaY-diff} over $[0,T]$ and taking expectations, the martingale term
  vanishes, giving
  \begin{align}
     & \E|\Delta Y_T|^2 - \E|\Delta Y_0|^2         \\
     & =
    \E\int_0^T
    2\Delta Y_t\Bigl(\tfrac12(\norm{Z_t}^2-\norm{\tilde Z_t}^2)+\delta F_t+r^F_t\Bigr)\,dt
    + \E\int_0^T \norm{\Delta Z_t}^2\,dt \nonumber \\
     & =
    \E\int_0^T \norm{\Delta Z_t}^2\,dt
    + \E\int_0^T \Delta Y_t\bigl(\norm{Z_t}^2-\norm{\tilde Z_t}^2\bigr)\,dt
    + 2\E\int_0^T \Delta Y_t(\delta F_t+r^F_t)\,dt.
    \label{eq:Energy_identity_forZ_correct}
  \end{align}
  Rearranging and using $-a\le |a|$ gives
  \begin{align}
     & \E\int_0^T \norm{\Delta Z_t}^2\,dt                    \\
     & =
    \E|\Delta Y_T|^2 - \E|\Delta Y_0|^2
    - \E\int_0^T \Delta Y_t\bigl(\norm{Z_t}^2-\norm{\tilde Z_t}^2\bigr)\,dt
    - 2\E\int_0^T \Delta Y_t(\delta F_t+r^F_t)\,dt \nonumber \\
     & \le
    \E|\Delta Y_T|^2 + \E|\Delta Y_0|^2
    + \E\int_0^T \bigl|\Delta Y_t\bigl(\norm{Z_t}^2-\norm{\tilde Z_t}^2\bigr)\bigr|\,dt
    + 2\E\int_0^T |\Delta Y_t|\,\bigl(|\delta F_t|+|r^F_t|\bigr)\,dt.
    \label{eq:Energy_ineq_forZ_start}
  \end{align}
  For any $\eta>0$, we bound the quadratic term. Using
  $\norm{Z_t}^2-\norm{\tilde Z_t}^2=(Z_t+\tilde Z_t)^\top\Delta Z_t$ and Cauchy--Schwarz inequality, we have
  \begin{align}
    \bigl|\Delta Y_t(\norm{Z_t}^2-\norm{\tilde Z_t}^2)\bigr|
     & =
    \bigl|\Delta Y_t (Z_t+\tilde Z_t)^\top \Delta Z_t\bigr|
    \le
    |\Delta Y_t|\,\norm{Z_t+\tilde Z_t}\,\norm{\Delta Z_t} \nonumber \\
     & \le
    \frac{\eta}{2}\norm{\Delta Z_t}^2
    + \frac{1}{2\eta}\norm{Z_t+\tilde Z_t}^2\,|\Delta Y_t|^2.
  \end{align}
  By Assumption~\ref{ass:grad-bound-global}, $\|Z_t\|\le \sigma C_Z$ and $\|\tilde Z_t\|\le \sigma C_Z$,
  so $\|Z_t+\tilde Z_t\|^2\le (2\sigma C_Z)^2=4\sigma^2 C_Z^2$. Therefore,
  \begin{align}\label{eq:Young_quadratic_bound_forZ}
    \bigl|\Delta Y_t(\norm{Z_t}^2-\norm{\tilde Z_t}^2)\bigr|
    \le
    \frac{\eta}{2}\norm{\Delta Z_t}^2
    + \frac{2\sigma^2 C_Z^2}{\eta}\,|\Delta Y_t|^2.
  \end{align}
  Next, for any $\gamma>0$, Young's inequality yields
  \begin{align}
    2|\Delta Y_t\,\delta F_t|
     & \le \frac{\gamma}{2}|\Delta Y_t|^2 + \frac{2}{\gamma}|\delta F_t|^2
    \le \frac{\gamma}{2}|\Delta Y_t|^2 + \frac{2(L_F)^2}{\gamma}\norm{\Delta X_t}^2,
    \label{eq:Young_deltaF_forZ}
  \end{align}
  where we used $|\delta F_t|\le L_F\|\Delta X_t\|$ from Assumption~\ref{ass:regularity-0}. Similarly,
  \begin{align}
    2|\Delta Y_t\,r^F_t|
    \le \frac{\gamma}{2}|\Delta Y_t|^2 + \frac{2}{\gamma}|r^F_t|^2
    \le \frac{\gamma}{2}|\Delta Y_t|^2 + \frac{2}{\gamma}\varepsilon_F^2.
    \label{eq:rF_bound_forZ}
  \end{align}
  Substituting \eqref{eq:Young_quadratic_bound_forZ}, \eqref{eq:Young_deltaF_forZ}, and \eqref{eq:rF_bound_forZ}
  into \eqref{eq:Energy_ineq_forZ_start}, and integrating over $t\in[0,T]$, we obtain
  \begin{align}
    \E\int_0^T \norm{\Delta Z_t}^2\,dt
     & \le
    \E|\Delta Y_T|^2 + \E|\Delta Y_0|^2
    + \frac{\eta}{2}\E\int_0^T \norm{\Delta Z_t}^2\,dt \nonumber \\
     & \quad
    + \left(\frac{2\sigma^2 C_Z^2}{\eta}+\gamma\right)\E\int_0^T |\Delta Y_t|^2\,dt
    + \frac{2(L_F)^2}{\gamma}\E\int_0^T \norm{\Delta X_t}^2\,dt
    + \frac{2T}{\gamma}\varepsilon_F^2.
    \label{eq:Z_energy_before_rearrange}
  \end{align}
  Rearranging and assuming $\eta\in(0,2)$ yields
  the claimed estimate.
\end{proof}

\begin{lemma}[Forward total energy bound]\label{thm:explicit_total_error_bound}
  Assume that Assumptions~\ref{ass:global-consistency}, \ref{ass:global-consistency-learned}, \ref{ass:regularity-0}, and \ref{ass:grad-bound-global} hold.
  Assume also \eqref{eq:Ddet_def}--\eqref{eq:diag_pos_def}.
  Then, with $\mathcal C_E$ defined in \eqref{eq:CE_def},
  \begin{align}\label{eq:E_coarse_bound}
    \mathcal E^{\mathrm{fwd}}
    \le
    \mathcal C_E\, (\E|\Delta Y_T|^2+\E|\Delta Y_0|^2 + T\varepsilon_f^2 + T\varepsilon_F^2).
  \end{align}
\end{lemma}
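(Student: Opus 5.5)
The plan is to combine the two differential inequalities of Lemma~\ref{thm:diff_ineq_XY} with the $Z$-energy estimate of Lemma~\ref{thm:Z_energy_estimate}, and to close the resulting coupled system for
\[
X^\star\coloneqq\sup_{t\in[0,T]}\E\|\Delta X_t\|^2,\qquad Y^\star\coloneqq\sup_{t\in[0,T]}\E|\Delta Y_t|^2 .
\]
We write $B\coloneqq\E|\Delta Y_T|^2+\E|\Delta Y_0|^2$. Since $\Delta X_0=0$ by the coupling $\tilde X_0=X_0$, Gr\"onwall applied to \eqref{eq:split-JP-DeltaX} gives
\[
X^\star\le E_X\Big(C_{XZ}\int_0^T\E\|\Delta Z_t\|^2\,\di t+C_{X,\varepsilon_f}T\varepsilon_f^2\Big).
\]
For $\Delta Y$ we apply Gr\"onwall to \eqref{eq:split-JP-DeltaY} starting from $t=0$, with initial value $\E|\Delta Y_0|^2\le B$. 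Bounding $\int_0^T\E\|\Delta X_t\|^2\,\di t\le TX^\star$, this gives
\[
Y^\star\le E_Y\Big(B+C_{YX}TX^\star+C_{YZ}\int_0^T\E\|\Delta Z_t\|^2\,\di t+C_{Y,\varepsilon_F}T\varepsilon_F^2\Big).
\]
Lemma~\ref{thm:Z_energy_estimate} with the same sup bounds yields
\[
\int_0^T\E\|\Delta Z_t\|^2\,\di t\le \kappa\big(B+C_{ZX}TX^\star+C_{ZY}TY^\star+C_{Z,\varepsilon_F}T\varepsilon_F^2\big).
\]

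Next we substitute this $Z$-bound into the two Gr\"onwall bounds. This produces the closed two-by-two linear system
\[
(1-a_X)X^\star-a_YY^\star\le R_X,\qquad -b_XX^\star+(1-b_Y)Y^\star\le R_Y,
\]
with $a_X,a_Y,b_X,b_Y$ exactly as in \eqref{eq:coeff_a_b_def}. The right-hand sides $R_X,R_Y$ are linear in $B$, $T\varepsilon_f^2$ and $T\varepsilon_F^2$.

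To keep the final constant in the form of \eqref{eq:CE_def}, we crudely bound every source term by $S\coloneqq B+T\varepsilon_f^2+T\varepsilon_F^2$. This gives
\[
R_X\le E_X\big(C_{X,\varepsilon_f}+C_{XZ}\kappa(1+C_{Z,\varepsilon_F})\big)S,
\]
and $R_Y\le E_Y\big(1+C_{Y,\varepsilon_F}+C_{YZ}\kappa(1+C_{Z,\varepsilon_F})\big)S$.

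The system is inverted using the small-gain hypotheses \eqref{eq:Ddet_def}--\eqref{eq:diag_pos_def}. Because the matrix $\begin{pmatrix}1-a_X&-a_Y\\-b_X&1-b_Y\end{pmatrix}$ has positive diagonal, nonpositive off-diagonal entries and positive determinant, it is an M-matrix, so its inverse is entrywise nonnegative. Multiplying the inequalities by the nonnegative cofactors preserves their direction, and we obtain
\[
X^\star\le\frac{(1-b_Y)R_X+a_YR_Y}{D_{\mathrm{det}}},\qquad Y^\star\le\frac{b_XR_X+(1-a_X)R_Y}{D_{\mathrm{det}}}.
\]
This step is the delicate one. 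The signs of the cofactors must be verified, and one must check that adding the second inequality times $a_Y$ to the first times $(1-b_Y)$ really eliminates $Y^\star$ without reversing any inequality.

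Finally, the energy is assembled. Writing $\mathcal Z\coloneqq\int_0^T\E\|\Delta Z_t\|^2\,\di t$, we have
\[
\mathcal E^{\mathrm{fwd}}=X^\star+Y^\star+\mathcal Z\le(1+\kappa C_{ZX}T)X^\star+(1+\kappa C_{ZY}T)Y^\star+\kappa(1+C_{Z,\varepsilon_F})S .
\]
Inserting the bounds on $X^\star$ and $Y^\star$ and regrouping the coefficients of $R_X$ and $R_Y$ gives exactly the three lines of $\mathcal C_E$, which proves \eqref{eq:E_coarse_bound}.

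The main obstacle is bookkeeping rather than analysis: every majorization by $S$ has to be done consistently so that the coefficients match \eqref{eq:CE_def}. One point needs care. The $\Delta Y$ bound must be run forward from $t=0$, so the term $B$ enters $R_Y$ but not $R_X$; this accounts for the ``$1+$'' appearing only in the $E_Y$ factor of $\mathcal C_E$.
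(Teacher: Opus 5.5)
Your proposal is correct and follows the paper's proof essentially line by line. It uses the same Gr\"onwall bounds for $\sup_t\E\|\Delta X_t\|^2$ and $\sup_t\E|\Delta Y_t|^2$ and substitutes the $Z$-energy estimate to get the $2\times 2$ system with $a_X,a_Y,b_X,b_Y$; it then eliminates by the same cofactor multiplications (your M-matrix phrasing is a restatement of that step) and majorizes by $S$ in the same way to reproduce $\mathcal C_E$. Your closing remark is imprecise: $B$ also enters $R_X$, through the $\kappa\mathcal K$ term, which is where the factor $1+C_{Z,\varepsilon_F}$ in the $E_X$ coefficient comes from. Only the Gr\"onwall initial value $\E|\Delta Y_0|^2$ appears in $R_Y$ alone and produces the extra ``$1+$'' in the $E_Y$ factor.
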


\begin{proof}[Proof of Lemma~\ref{thm:explicit_total_error_bound}]
  We define
  \begin{align}
    \mathcal X(t)\coloneqq \E\|\Delta X_t\|^2,\qquad
    \mathcal Y(t)\coloneqq \E|\Delta Y_t|^2,\qquad
    \mathcal Z\coloneqq \int_0^T\E\|\Delta Z_t\|^2\,dt,
  \end{align}
  \begin{align}
    \mathcal X_* \coloneqq \sup_{t\in[0,T]}\mathcal X(t),\qquad
    \mathcal Y_* \coloneqq \sup_{t\in[0,T]}\mathcal Y(t),\qquad
    \mathcal E^{\mathrm{fwd}} = \mathcal X_*+\mathcal Y_*+\mathcal Z.
  \end{align}
  First, we bound $\mathcal X_*$.
  From \eqref{eq:split-JP-DeltaX}, Gr\"onwall's inequality gives, for all $t\in[0,T]$,
  \begin{align}
    \mathcal X(t)
    \le
    e^{C_{XX}t}\mathcal X(0)
    + e^{C_{XX}t} \int_0^t e^{-C_{XX}s}
    \Big(C_{XZ}\E\|\Delta Z_s\|^2 + C_{X,\varepsilon_f}\varepsilon_f^2\Big)\,\di s.
  \end{align}
  Since $e^{C_{XX}t}\le E_X$, $\mathcal X(0)=0$ (because $\Delta X_0=0$), and
  $\int_0^t \E\|\Delta Z_s\|^2ds\le \mathcal Z$, we obtain
  \begin{align}\label{eq:Xstar_pre}
    \mathcal X_* \le E_X\left(C_{XZ}\mathcal Z + C_{X,\varepsilon_f}T\varepsilon_f^2\right).
  \end{align}
  Next, we bound $\mathcal Y_*$.
  From \eqref{eq:split-JP-DeltaY}, Gr\"onwall's inequality gives, for all $t\in[0,T]$,
  \begin{align}
    \mathcal Y(t)
    \le
    e^{C_{YY}t}\mathcal Y(0)
    +
    e^{C_{YY}t}\int_0^t e^{-C_{YY}s}
    \Big(C_{YX}\mathcal X(s)+C_{YZ}\E\|\Delta Z_s\|^2 + C_{Y,\varepsilon_F}\varepsilon_F^2\Big)\,\di s.
  \end{align}
  Since $e^{C_{YY}t}\le E_Y$, $\int_0^t \mathcal X(s)ds\le T\mathcal X_*$, and
  $\int_0^t \E\|\Delta Z_s\|^2ds\le \mathcal Z$, we obtain
  \begin{align}\label{eq:Ystar_pre}
    \mathcal Y_*
    \le
    E_Y\Big(\mathcal Y(0)+C_{YX}T\mathcal X_* + C_{YZ}\mathcal Z + C_{Y,\varepsilon_F}T\varepsilon_F^2\Big).
  \end{align}
  Then we bound $\mathcal Z$.
  By Lemma~\ref{thm:Z_energy_estimate},
  \begin{align}
    \mathcal Z
    \le
    \kappa\Big(
    \mathcal K + C_{ZX}\int_0^T \mathcal X(t)\,dt + C_{ZY}\int_0^T \mathcal Y(t)\,dt
    \Big),
  \end{align}
  where
  \begin{align}
    \mathcal K \coloneqq \E|\Delta Y_T|^2 + \E|\Delta Y_0|^2 + C_{Z,\varepsilon_F}\,T\,\varepsilon_F^2.
  \end{align}
  Since $\int_0^T\mathcal X(t)dt\le T\mathcal X_*$ and $\int_0^T\mathcal Y(t)dt\le T\mathcal Y_*$,
  we obtain
  \begin{align}\label{eq:Z_pre}
    \mathcal Z \le \kappa\Big(\mathcal K + C_{ZX}T\mathcal X_* + C_{ZY}T\mathcal Y_*\Big).
  \end{align}
  We now close the inequalities for $\mathcal X_*$ and $\mathcal Y_*$.
  We define
  \begin{align}
    \begin{aligned}
      a_0 & \coloneqq E_X\Big(C_{X,\varepsilon_f}\,T\,\varepsilon_f^2 + C_{XZ}\,\kappa\,\mathcal K\Big),                 \\
      b_0 & \coloneqq E_Y\Big(\mathcal Y(0) + C_{Y,\varepsilon_F}\,T\,\varepsilon_F^2 + C_{YZ}\,\kappa\,\mathcal K\Big).
    \end{aligned}\label{eq:inhom_def}
  \end{align}
  Substituting \eqref{eq:Z_pre} into \eqref{eq:Xstar_pre} yields
  \begin{align}
    \mathcal X_*
    \le
    E_X\Big(C_{XZ}\kappa(\mathcal K + C_{ZX}T\mathcal X_* + C_{ZY}T\mathcal Y_*)
    + C_{X,\varepsilon_f}T\varepsilon_f^2\Big).
  \end{align}
  Rearranging gives
  \begin{align}\label{eq:ineq_Xstar_closed}
    (1-a_X)\mathcal X_* - a_Y\mathcal Y_* \le a_0,
  \end{align}
  with $a_X,a_Y,a_0$ as defined in \eqref{eq:coeff_a_b_def} and \eqref{eq:inhom_def}.
  Similarly, substituting \eqref{eq:Z_pre} into \eqref{eq:Ystar_pre} yields
  \begin{align}
    \mathcal Y_*
    \le
    E_Y\Big(\mathcal Y(0)+C_{YX}T\mathcal X_*
    + C_{YZ}\kappa(\mathcal K + C_{ZX}T\mathcal X_* + C_{ZY}T\mathcal Y_*)
    + C_{Y,\varepsilon_F}T\varepsilon_F^2\Big),
  \end{align}
  which rearranges to
  \begin{align}\label{eq:ineq_Ystar_closed}
    -b_X\mathcal X_* + (1-b_Y)\mathcal Y_* \le b_0,
  \end{align}
  with $b_X,b_Y,b_0$ as defined in \eqref{eq:coeff_a_b_def} and \eqref{eq:inhom_def}.
  We solve the resulting linear system.
  Since $a_Y,b_X\ge 0$, $1-a_X>0$, $1-b_Y>0$ by \eqref{eq:diag_pos_def},
  and $D_{\mathrm{det}}=(1-a_X)(1-b_Y)-a_Yb_X>0$ by assumption \eqref{eq:Ddet_def},
  multiplying \eqref{eq:ineq_Xstar_closed} by $(1-b_Y)$ and
  \eqref{eq:ineq_Ystar_closed} by $a_Y$ preserves the inequality directions and yields
  \begin{align}
    \mathcal X_* \le \frac{(1-b_Y)a_0 + a_Y b_0}{D_{\mathrm{det}}}.
  \end{align}
  Likewise, multiplying \eqref{eq:ineq_Xstar_closed} by $b_X$ and
  \eqref{eq:ineq_Ystar_closed} by $(1-a_X)$ gives
  \begin{align}
    \mathcal Y_* \le \frac{b_X a_0 + (1-a_X)b_0}{D_{\mathrm{det}}}.
  \end{align}
  Finally, we bound $\mathcal E^{\mathrm{fwd}}$.
  Substituting the above bounds into \eqref{eq:Z_pre} gives
  \begin{align}
    \begin{aligned}\label{eq:E_explicit}
      \mathcal E^{\mathrm{fwd}}
       & =\mathcal X_*+\mathcal Y_*+\mathcal Z                                                            \\
       & \le
      \Big(1+\kappa C_{ZX}T\Big)\mathcal X_* + \Big(1+\kappa C_{ZY}T\Big)\mathcal Y_* + \kappa\mathcal K, \\
       & \le
      \Big(1+\kappa C_{ZX}T\Big)\frac{(1-b_Y)a_0 + a_Y b_0}{D_{\mathrm{det}}}
      + \Big(1+\kappa C_{ZY}T\Big)\frac{b_X a_0 + (1-a_X)b_0}{D_{\mathrm{det}}}
      + \kappa\mathcal K.
    \end{aligned}
  \end{align}
  In particular, define the shorthand notations
  \begin{align}
    S_{\mathrm{err}} & \;\coloneqq\; \E|\Delta Y_T|^2+\E|\Delta Y_0|^2 + T\varepsilon_f^2 + T\varepsilon_F^2,\label{eq:Serr_def} \\
    C_K              & \;\coloneqq\; 1 + C_{Z,\varepsilon_F},\label{eq:CK_def}
  \end{align}
  so that $\mathcal K \le C_K\,S_{\mathrm{err}}$.
  Then, we have
  \begin{align}
    a_0
     & = E_X\Big(C_{X,\varepsilon_f}T\varepsilon_f^2 + C_{XZ}\kappa\mathcal K\Big)
    \le E_X\Big(C_{X,\varepsilon_f} + C_{XZ}\kappa C_K\Big)\,S_{\mathrm{err}},\label{eq:a0_bound}  \\
    b_0
     & = E_Y\Big(\mathcal Y(0) + C_{Y,\varepsilon_F}T\varepsilon_F^2 + C_{YZ}\kappa\mathcal K\Big)
    \le E_Y\Big(1 + C_{Y,\varepsilon_F} + C_{YZ}\kappa C_K\Big)\,S_{\mathrm{err}}.\label{eq:b0_bound}
  \end{align}
  Consequently, from \eqref{eq:E_explicit} and the definitions \eqref{eq:CE_def},
  \begin{align}
    \mathcal E^{\mathrm{fwd}}
    \le
    \mathcal C_E\, S_{\mathrm{err}},
  \end{align}
  which is exactly \eqref{eq:E_coarse_bound}.
\end{proof}

\begin{lemma}[Backward total energy bound]\label{thm:explicit_total_error_bound_backward_compact}
  Assume that Assumptions~\ref{ass:global-consistency}, \ref{ass:global-consistency-learned}, \ref{ass:regularity-0}, and \ref{ass:grad-bound-global} hold.
  Assume also \eqref{eq:Ddet_def}--\eqref{eq:diag_pos_def}.
  Then, with the same constant $\mathcal C_E$,
  \begin{align}\label{eq:backward_total_energy_bound_compact}
    \mathcal E^{\mathrm{bwd}}
    \le
    \mathcal C_E\Big(
    \E|\Delta\widehat Y_0|^2+\E|\Delta\widehat Y_T|^2
    +T\varepsilon_f^2+T\varepsilon_F^2
    \Big).
  \end{align}
\end{lemma}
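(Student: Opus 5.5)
The plan is to mirror the forward argument (Lemmas~\ref{thm:diff_ineq_XY}, \ref{thm:Z_energy_estimate}, \ref{thm:explicit_total_error_bound}) after reversing time. We set $s\mapsto T-s$, or equivalently work directly with the backward-time system \eqref{eq:fbsde-backward-full-x}--\eqref{eq:fbsde-backward-full-yhat}, whose structure is identical to the forward one. The only changes are that the drift is $-f+\sigma\widehat Z$ instead of $f+\sigma Z$, and that the anchored endpoint is $\tilde{\widehat X}_T=\widehat X_T$ instead of $\tilde X_0=X_0$.

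\textbf{Step 1: differential inequalities for $\Delta\widehat X$ and $\Delta\widehat Y$.} We split
\[
\delta\widehat f_s \coloneqq -f(\widehat X_s,s,\rho_s)+f(\tilde{\widehat X}_s,s,\rho_s),\qquad
\widehat r^f_s \coloneqq -f(\tilde{\widehat X}_s,s,\rho_s)+\tilde f(\tilde{\widehat X}_s,s).
\]
The sign flip does not change the norms: $\|\delta\widehat f_s\|\le L_f\|\Delta\widehat X_s\|$ by Assumption~\ref{ass:regularity-0}, and $\|\widehat r^f_s\|\le\varepsilon_f$. We use that $\widehat X$ has marginal $\rho_s$ by Assumption~\ref{ass:global-consistency}(iii), so $\varepsilon_f$ in \eqref{eq:coeferr_JP} applies, and similarly for $F$. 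The same Young splittings with parameters $\alpha$ and $\beta$ then give, as functions of the reversed time $\tau=T-s$ started from $\Delta\widehat X=0$ at $\tau=0$, inequalities with exactly the constants $C_{XX},C_{XZ},C_{X,\varepsilon_f}$ and $C_{YY},C_{YX},C_{YZ},C_{Y,\varepsilon_F}$. Assumption~\ref{ass:grad-bound-global} enters through \eqref{eq:grad-bound-bwd}, giving $\|\widehat Z\|,\|\tilde{\widehat Z}\|\le\sigma C_Z$.

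\textbf{Step 2: $\widehat Z$-energy.} We integrate It\^o's formula for $|\Delta\widehat Y_s|^2$ over $[0,T]$. The martingale part vanishes, and the boundary terms are $\E|\Delta\widehat Y_T|^2-\E|\Delta\widehat Y_0|^2$, which we bound by the sum of absolute values exactly as in \eqref{eq:Energy_ineq_forZ_start}. The $\eta,\gamma$ Young steps then reproduce \eqref{eq:Z-est-general} with hats.

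\textbf{Step 3: closing the system.} We apply Gr\"onwall in reversed time to obtain analogues of \eqref{eq:Xstar_pre}, \eqref{eq:Ystar_pre} and \eqref{eq:Z_pre}. The Gr\"onwall initial value for $\widehat Y$ is now $\E|\Delta\widehat Y_T|^2$ rather than $\mathcal Y(0)$; it is absorbed into $S_{\mathrm{err}}$ with coefficient $1$, exactly as $\mathcal Y(0)$ was in \eqref{eq:b0_bound}. Because the feedback coefficients $a_X,a_Y,b_X,b_Y$ are unchanged, the same $2\times2$ linear-inequality solution under \eqref{eq:Ddet_def}--\eqref{eq:diag_pos_def} yields the bound with the same $\mathcal C_E$.

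\textbf{Main obstacle.} The main difficulty is the filtration and measurability in the time reversal. The backward-time system is driven by $W$ while being solved from $T$. To apply It\^o's formula and kill the martingale term, we need $\Delta\widehat Y_s\,\Delta\widehat Z_s^\top\,\di W_s$ to be a genuine martingale increment in the reversed filtration. I would handle this by invoking the strong-solution structure in Assumption~\ref{ass:global-consistency-learned}(ii) as the paper implicitly does: we interpret $W$ in \eqref{eq:fbsde-backward-full} as a Brownian motion adapted to the reversed filtration. The bounded $\widehat Z$ then guarantees square-integrability, and the computation is purely notational relative to the forward case.
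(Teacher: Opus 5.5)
Your proposal is correct and follows the same route as the paper. The paper reverses time via $\check X_\tau=\widehat X_{T-\tau}$, $\check Y_\tau=-\widehat Y_{T-\tau}$, $\check Z_\tau=-\widehat Z_{T-\tau}$ with the reversed Brownian motion and filtration, observes that the reversed true and learned systems have the forward form with the same constants $L_f,L_F,C_Z$ and a coupled initial condition coming from $\tilde{\widehat X}_T=\widehat X_T$, and then applies Lemma~\ref{thm:explicit_total_error_bound} to the reversed pair as a black box instead of rerunning Steps 1--3 with hats; the boundary terms become $\E|\Delta\widehat Y_0|^2$ and $\E|\Delta\widehat Y_T|^2$, and the orientation of the boundary difference in your Step 2 is immaterial because you bound it by the sum of the two endpoint terms.
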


\begin{proof}[Proof of Lemma~\ref{thm:explicit_total_error_bound_backward_compact}]
  We rewrite the backward-time system as a forward problem by time reversal.
  For $t\in[0,T]$, define
  \begin{align}
    \check X_t \coloneqq \widehat X_{T-t},
    \qquad
    \tilde{\check X}_t \coloneqq \tilde{\widehat X}_{T-t},
    \qquad
    \check Y_t \coloneqq -\widehat Y_{T-t},
    \qquad
    \tilde{\check Y}_t \coloneqq -\tilde{\widehat Y}_{T-t},
  \end{align}
  and
  \begin{align}
    \check Z_t \coloneqq -\widehat Z_{T-t},
    \qquad
    \tilde{\check Z}_t \coloneqq -\tilde{\widehat Z}_{T-t}.
  \end{align}
  Also let
  \begin{align}
    \check W_t \coloneqq W_T-W_{T-t},
  \end{align}
  which is a Brownian motion with respect to the reversed filtration.
  A direct substitution shows that $(\check X,\check Y,\check Z)$ and
  $(\tilde{\check X},\tilde{\check Y},\tilde{\check Z})$ satisfy forward-time FBSDEs of the same algebraic form as \eqref{eq:fbsde-forward-full-x}--\eqref{eq:fbsde-forward-full-y},
  with the same constants $L_f,L_F,C_Z$ and with the coupled initial condition
  \begin{align}
    \tilde{\check X}_0=\check X_0
    \qquad\text{a.s.},
  \end{align}
  because $\tilde{\widehat X}_T=\widehat X_T$ a.s.\ by Assumption~\ref{ass:global-consistency-learned}.
  Therefore, the proof of Lemma~\ref{thm:explicit_total_error_bound} applies to the reversed pair.
  Since
  \begin{align}
    \sup_{t\in[0,T]}\E\|\check X_t-\tilde{\check X}_t\|^2
     & = \sup_{s\in[0,T]}\E\|\Delta\widehat X_s\|^2, \\
    \sup_{t\in[0,T]}\E|\check Y_t-\tilde{\check Y}_t|^2
     & = \sup_{s\in[0,T]}\E|\Delta\widehat Y_s|^2,   \\
    \int_0^T \E\|\check Z_t-\tilde{\check Z}_t\|^2\,dt
     & = \int_0^T \E\|\Delta\widehat Z_s\|^2\,ds,
  \end{align}
  the left-hand side is exactly $\mathcal E^{\mathrm{bwd}}$.
  The boundary terms of the reversed system become
  \begin{align}
    \E|\check Y_T-\tilde{\check Y}_T|^2 = \E|\Delta\widehat Y_0|^2,
    \qquad
    \E|\check Y_0-\tilde{\check Y}_0|^2 = \E|\Delta\widehat Y_T|^2,
  \end{align}
  which proves \eqref{eq:backward_total_energy_bound_compact}.
\end{proof}

\begin{proof}[Proof of Theorem~\ref{thm:combined_total_energy_bound}]
  By Lemma~\ref{thm:explicit_total_error_bound}, there exists a constant
  $\mathcal C_E>0$ such that
  \begin{align}\label{eq:fwd_energy_recall}
    \mathcal E^{\mathrm{fwd}}
    \le
    \mathcal C_E\Big(
    \E|\Delta Y_0|^2+\E|\Delta Y_T|^2
    +T\varepsilon_f^2+T\varepsilon_F^2
    \Big),
  \end{align}
  Likewise, by Lemma~\ref{thm:explicit_total_error_bound_backward_compact},
  the same constant $\mathcal C_E>0$ satisfies
  \begin{align}\label{eq:bwd_energy_recall}
    \mathcal E^{\mathrm{bwd}}
    \le
    \mathcal C_E\Big(
    \E|\Delta \widehat Y_0|^2+\E|\Delta \widehat Y_T|^2
    +T\varepsilon_f^2+T\varepsilon_F^2
    \Big),
  \end{align}
  Adding \eqref{eq:fwd_energy_recall} and \eqref{eq:bwd_energy_recall} yields
  \begin{align}\label{eq:sum_energy_pre}
    \mathcal E^{\mathrm{fwd}}+\mathcal E^{\mathrm{bwd}}
    \le
    \mathcal C_E\Big(
    \E|\Delta Y_0|^2+\E|\Delta\widehat Y_0|^2
    +\E|\Delta Y_T|^2+\E|\Delta\widehat Y_T|^2
    +2T\varepsilon_f^2+2T\varepsilon_F^2
    \Big).
  \end{align}
  The endpoint representative in
  Assumptions~\ref{ass:global-consistency}(iv) and
  \ref{ass:global-consistency-learned}(iv), together with the endpoint
  couplings in Assumption~\ref{ass:global-consistency-learned}(i)--(ii) and the
  marginal consistency in Assumptions~\ref{ass:global-consistency}(iii) and
  \ref{ass:global-consistency-learned}(iii), gives
  \[
    \Delta Y_0=0,
    \qquad
    \Delta Y_T=0,
    \qquad
    \Delta\widehat Y_0=0,
    \qquad
    \Delta\widehat Y_T=0
    \quad\text{a.s.}
  \]
  Substituting these identities into \eqref{eq:sum_energy_pre} gives
  \eqref{eq:combined_energy_bound}.
\end{proof}

\section{Computational Environment}\label{app:computational_environment}

The numerical experiments were run on two GPU environments, summarized in
Table~\ref{tab:computational_environment}.
Code will be released upon publication.

\begin{table}[ht]
  \centering
  \caption{Computational environments used for the numerical experiments.}
  \begin{tabular}{p{0.35\linewidth}p{0.55\linewidth}}
    \hline
    Experiments & GPU environment                                \\
    \hline
    GMM dynamics
                & NVIDIA L40S PCIe GPU with 48 GB memory.        \\
    V-neck and Opinion dynamics
                & NVIDIA GeForce RTX 3090 GPU with 24 GB memory. \\
    \hline
  \end{tabular}
  \label{tab:computational_environment}
\end{table}


\end{document}